\documentclass[12pt]{amsart}

\usepackage{amstext}
\usepackage{amsthm}
\usepackage{amsmath}
\usepackage{latexsym}
\usepackage{amsfonts}
\usepackage{mathtools}
\usepackage{enumerate}
\usepackage{geometry}
\usepackage{blindtext}
\usepackage{incgraph,tikz}
\usepackage[utf8]{inputenc}
\usepackage{bbold}
\usepackage{nameref}
\usepackage[normalem]{ulem}
\usepackage{wasysym}
\usepackage{csquotes}
\usepackage{anyfontsize}
\usepackage{hyperref}

\newtheorem*{problem*}{Problem}
\newtheorem{theorem}{Theorem}[section]
\newtheorem*{question*}{Question}
\newtheorem{lemma}[theorem]{Lemma}
\newtheorem{proposition}[theorem]{Proposition}

\theoremstyle{definition}\newtheorem{remark}[theorem]{Remark}
\theoremstyle{definition}
\theoremstyle{definition}
\numberwithin{equation}{section}

\allowdisplaybreaks

\newcommand\Hol{{\mathrm{Hol}(\mathbb D)}}

\newcommand\cD{{\mathcal D}}

\newcommand\CC{{\mathbb C}}
\newcommand\RR{{\mathbb R}}
\newcommand\DD{{\mathbb D}}
\newcommand\TT{{\mathbb T}}
\newcommand\DDD{{\mathcal D}}

\title[Cantor sets and cyclicity]{Riesz $\alpha$-capacity of Cantor sets and cyclicity in Dirichlet-type spaces}
\author[D. Vavitsas]{Dimitrios Vavitsas}
\address{School of Mathematics (Zhuhai), Sun Yat-Sen University, Zhuhai, Guangdong, 519082, P. R.
China}
\email{vavitsas@mail.sysu.edu.cn}
\author[J. Wu]{Jujie Wu}
\address{School of Mathematics (Zhuhai), Sun Yat-Sen University, Zhuhai, Guangdong, 519082, P. R.
China}
\email{wujj86@mail.sysu.edu.cn}
\author[K. Zarvalis]{Konstantinos Zarvalis}
\address{Department of Mathematics, Aristotle University of Thessaloniki, 54124, Thessaloniki, Greece}
\email{zarkonath@math.auth.gr}

\subjclass[2020]{Primary: 30C85, 46E20, 47A16; Secondary: 30H99, 31A15}
\keywords{Cantor sets, cyclicity, Dirichlet-type spaces, Riesz $\alpha$-capacity}

\begin{document}
\maketitle
\begin{abstract}
We examine the threshold of the cyclicity for functions in Dirichlet-type spaces $\mathcal{D}_\alpha$, $\alpha\in(0,1]$. Given a fixed $\alpha^*\in(0,1]$, we construct a holomorphic function $f\in D_{\alpha^*}$ which is cyclic in $\mathcal{D}_\alpha$ for all $ \alpha<\alpha^*$, but fails to be cyclic in $\mathcal{D}_{\alpha^*}$. This function serves as a counterexample to the persistence of cyclicity at the critical index $\alpha^*$. Throughout the construction process, we work with generalized Cantor sets and study their Riesz $\alpha$-capacity. 
\end{abstract}

\section{Introduction}

This article explores the cyclicity of functions with respect to the \textit{Dirichlet-type spaces} $\mathcal{D}_\alpha$, $\alpha\in(0,1]$, of the unit disc $\DD$ in the complex plane $\CC$. Our objective is to discover whether cyclicity in these spaces is ``closed'' in the following sense: given a fixed $\alpha^* \in (0,1]$, does the cyclicity of $f$ in $\DDD_\alpha$, for all $\alpha < \alpha^*$, imply the cyclicity of $f$ in $\DDD_{\alpha^*}$? We will answer this question in the negative by constructing explicit counterexamples. 

In order to produce our counterexamples, we shall work beforehand with \textit{generalized Cantor sets} of the unit circle $\TT\vcentcolon=\partial\DD$. By the term generalized Cantor set, we mean Cantor sets produced as follows: Take the unit circle and remove an open subarc with $-1$ in the middle, so that two closed arcs of equal length with common endpoint $1$ remain. Then, remove two open subarcs from their middles, so that four closed arcs remain (two of them have common endpoint $1$). Continue this process inductively, and what remains is a generalized Cantor set in $\TT$. Such sets are known to be Carleson sets. 

We are going to study generalized Cantor sets with regard to their \textit{Riesz} $\alpha$\textit{-capacity}, $\alpha\in(0,1]$ (the case $\alpha=1$ yields the well-known \textit{logarithmic capacity}). Riesz $\alpha$-capacity is a tool that gauges the size of subsets of the unit circle, but essentially differs from the Lebesgue measure. As a matter of fact, it provides a more delicate approach in measuring sets, since there exist compact subsets of the unit circle whose Lebesgue measure is zero, yet they are of positive Riesz $\alpha$-capacity for certain parameters $\alpha\in(0,1]$.  

Our first objective is to make explicit constructions of generalized Cantor sets satisfying prescribed properties with respect to their Riesz capacities. More specifically, given $\alpha^*\in(0,1]$, we will create such a set $E_{\alpha^*}$ whose Riesz $\alpha$-capacity vanishes for all $\alpha\in (0,\alpha^*)$, whereas its $\alpha^*$-Riesz capacity is positive. In addition, whenever $\alpha^*\in(0,1)$ we will proceed to a similar construction of a set $F_{\alpha^*}$ whose Riesz $\alpha$-capacity vanishes if and only if $\alpha\in(0,\alpha^*]$.

Explicit constructions of such Carleson sets are of interest in their own right. However, due to the strong ties betweeen potential theoretic tools and functional analysis, we may utilize our generalized Cantor sets to produce results relative to spaces of holomorphic functions. As a matter of fact, the notion of Riesz $\alpha$-capacity is closely related to the cyclicity of functions in the Dirichlet-type spaces of the unit disc. This important topic has been at the forefront of many advancements, from the seminal paper \cite{Brown-Shields} of Brown and Shields, to a wide variety of high-quality research conducted in the past decades; see \cite{Brown-Cohn,Primer,EFKR,EFKR2,EFKR*} to name a few. 

The Dirichlet-type spaces $\cD_\alpha$, $\alpha\in \RR$, form a family consisting of Hilbert spaces of holomorphic functions in the unit disc. For the values $\alpha>1$, the space $\cD_\alpha$ is an algebra. On the other hand, whenever $\alpha\leq 1$, $\mathcal{D}_\alpha$ is a Banach space which is not an algebra, and its cyclic vectors play a role similar to that played by invertible elements in algebras.  Classical Hilbert spaces such as the Bergman, the Hardy and the Dirichlet spaces are all special instances of Dirichlet-type spaces; in particular, in terms of equivalent norms, $\cD_{-1}$ is the Bergman space, $\cD_0$ is the Hardy space, and $\cD_1$ is the Dirichlet space (denoted by $\DDD$). For a presentation of the theory concerning this family of weighted Dirichlet spaces, we refer the interested reader to \cite{Aleman1}, \cite{Brown-Shields} and \cite[Chapter 1]{Primer}. More information and concrete definitions follow in Section \ref{sec:preliminaries}.

We will mainly deal with Dirichlet-type spaces $\mathcal{D}_\alpha$ where the parameter lies in the interval $(0,1]$. These spaces provide a natural scale of Banach spaces linking the classical Hardy space to the classical Dirichlet space. Let us point out that in the Hardy space ($\alpha=0$), a complete characterization of cyclicity was achieved by Beurling; see \cite{Beurling}. In fact, a function $f$ is cyclic in $\mathcal{D}_0$ if and only if it is outer. The characterization of cyclic in $\cD_\alpha$ functions, for $\alpha\in (0,1]$, remains an open problem.

It can be easily deduced that if $\alpha_1,\alpha_2\in\RR$ with $\alpha_1<\alpha_2$ and $f\in\mathcal{D}_{\alpha_2}$ is cyclic in $\mathcal{D}_{\alpha_2}$, then it necessarily belongs to and is cyclic in $\mathcal{D}_{\alpha_1}$ as well. The exact converse trivially does not hold. So, it is natural to wonder whether the following statement holds:
\begin{problem*}\phantomsection \label{Problem}
Fix $\alpha^*\in (0,1]$ and suppose that $f$ is cyclic in $\mathcal{D}_\alpha$, for all $\alpha<\alpha^*$. Is $f$ necessarily cyclic in $\mathcal{D}_{\alpha^*}$?    
\end{problem*}

This question constitutes the main motivation of our work. We are going to answer it in the negative by providing two different explicit counterexamples. Given a function $f$ as in the statement above, let us denote its \textit{zero set} by $\mathcal{Z}(f)=\{z\in\DD\vcentcolon f(z)=0\}$. A wide variety of published work points to the fact that the cyclicity of $f$ in the spaces $\mathcal{D}_\alpha$ is inextricably related to the size of $\mathcal{Z}(f^*)$ with respect to its Riesz $\alpha$-capacity, where $f^*$ denotes the radial limit function of $f$. Recall that by Fatou's Theorem, $f^*$ is defined almost everywhere on $\TT$.  

With this in mind, we will construct, as we mentioned, a generalized Cantor set $E_{\alpha^*}\subset \TT$ that is a Carleson set and whose Riesz $\alpha$-capacity vanishes for all $\alpha\in (0,\alpha^*)$, whereas its Riesz $\alpha^*$-capacity is positive. This will in turn lead to the construction of two fundamentally different functions whose zero set coincides with $E_{\alpha^*}$ and which act as our desired counterexamples.

The first ``threshold'' function is constructed exclusively through the fact that $E_{\alpha^*}$ is a Carleson set and is independent of the actual values of its Riesz capacities. Surprisingly, the resulting function $f$ belongs to $\cD_{\alpha^*}$, can be created to be as smooth as desired on the closed unit disc, but will still not be cyclic in $\cD_{\alpha^*}$. An important takeway is that cyclicity is not a ``closed'' notion even when the function we investigate satisfies nice differentiability conditions.

On the other hand, the second construction relies profoundly on the vanishing of the Riesz $\alpha$-capacity of $E_{\alpha^*}$ for $\alpha\in (0,\alpha^*)$. In fact, the very definition of the function that will play the role of our counterexample is based on the Riesz capacities of neighborhoods of $E_{\alpha^*}$ with respect to the topology of $\TT$. The resulting function $f$ will belong to the space $\cD_{\alpha}$, for all $\alpha<\alpha^*$, will actually be continuous up to the unit circle, and will be cyclic in $\cD_{\alpha}$, for all $\alpha<\alpha^*$. Nonetheless, either $f\notin\cD_{\alpha^*}$ (and by extension it is obviously not cyclic in it), or $f\in\cD_{\alpha^*}$ but is not cyclic in it. 

In both cases, the non-cyclicity of the critical function in $\cD_{\alpha^*}$ hinges on the positivity of the Riesz $\alpha^*$-capacity of $\mathcal{Z}(f^*)$. Let us note that, in case $\alpha^*=1$, this question is related to the celebrated Brown--Shields Conjecture \cite[Question 12]{Brown-Shields} concerning the characterization of cyclic functions in $\cD$ because, in some sense, our constructions support its validity.

On another note, the interval of the parameters where a polynomial is cyclic in the unit disc setting, in the two-dimensional unit ball setting and in the bidisc setting is closed from above; see \cite{Kosinski-Sola1,Bergvist,Brown-Shields,Kosinski-Sola2, Vavitsas1, Vavitsas2} for polynomials and a recent discussion with explicit examples in \cite{Pouriya}. Nevertheless, it is natural to seek a counterexample to this phenomenon in the case of general holomorphic functions.

The article is structured as follows: In Section \ref{sec:preliminaries} we proceed to a consice presentation of the basic notions and tools we are going to use throughout the course of this work. Then, in Section \ref{sec:cantor} we will construct the suitable generalized Cantor sets required for solving the \hyperref[Problem]{Problem}. Next, in the final two sections of the paper, we will state and prove necessary lemmas, and we will construct our critical functions.

\section{Preliminaries}\label{sec:preliminaries}

\subsection{Riesz $\alpha$-energy and Riesz $\alpha$-capacity}
We commence with a very useful tool from potential theory. As we have already mentioned in the Introduction, we will make use of the Riesz $\alpha$-capacity of sets on the unit circle. To properly define it, let $\alpha\in(0,1]$ and consider the non-negative \textit{kernel} $K_\alpha:(0,+\infty)\rightarrow [0,+\infty)$ given by
\begin{equation}\label{eq:kernel}
K_\alpha(t)\vcentcolon=
\begin{cases}
    \log^+(2/t), \quad\quad\hspace{0.1cm}\text{for }\alpha=1\\
    t^{\alpha-1}, \quad\quad\quad\quad\hspace{0.2cm}\text{for } \alpha\in (0,1),
\end{cases}
\end{equation}
where $\log^+(x)\vcentcolon=\max\{0,\log x\}$, for $x>0$. Let $\mu$ be a Borel probability measure supported on a compact set $E\subset \TT$. Its \textit{Riesz} $\alpha$\textit{-energy} is defined through
\begin{equation}\label{eq:energy}
I_\alpha[\mu]\vcentcolon=\int_{\TT}\int_{\TT}K_\alpha(|\zeta-\eta|)d\mu(\eta)d\mu(\zeta)=\int_{\TT}K_{\alpha,\mu}(\zeta)d\mu(\zeta),
\end{equation}
where
\begin{equation}\label{eq:potential}
    K_{\alpha,\mu}(\zeta)\vcentcolon=\int_{\TT}K_\alpha(|\zeta-\eta|)d\mu(\eta)
\end{equation}
is called the \textit{Riesz} $\alpha$\textit{-potential} of $\mu$. Then, the \textit{Riesz} $\alpha$\textit{-capacity} of $E$ is given by
\begin{equation*}
\mathrm{cap}_{\alpha}(E):=\inf\{I_\alpha[\mu]:\mu\in \mathcal{P}(E)\}^{-1},
\end{equation*}
where $\mathcal{P}(E)$ denotes the set of all Borel probability measures supported on $E$. Note that $\mathrm{cap}_\alpha(E)>0$ yields $I_\alpha[\mu]<+\infty$ for at least one measure $\mu\in \mathcal{P}(E)$. On the other hand, $\mathrm{cap}_\alpha(E)=0$ if and only if  $I_\alpha[\mu]=+\infty$ for all $\mu\in \mathcal{P}(E)$. In case $\alpha=1$, then $I_\alpha[\mu]$ and $\mathrm{cap}_\alpha(E)$ are called the \textit{logarithmic energy} of $\mu$ and the \textit{logarithmic capacity} of $E$, respectively. The definition of capacity may be extended to general sets (not necessarily compact) on the unit circle by means of the so-called \textit{inner capacity}. Indeed, for any $E\subseteq\TT$ and any $\alpha\in(0,1]$, its inner capacity is given by
\begin{equation*}
    \mathrm{cap}_\alpha(E)\vcentcolon=\sup\{\mathrm{cap}_\alpha(F) \vcentcolon \, F\subset E, \, F \text{ compact}\}.
\end{equation*}
There exists a similar definition for the \textit{outer capacity}, but for Borel sets, the two definitions coincide due to Choquet's Theorem; see \cite[Definition 2.1.8]{Primer} and \cite[Theorem 5.1.2]{Ransford}. From now on, for any Borel subset of the unit circle we are going to just use the term capacity for the inner capacity as well.

If $E\subset \TT$ is a compact set and $\mathrm{cap}_{\alpha}(E)>0$, then an \emph{equilibrium measure} for $E$ is a measure $\nu\in\mathcal{P}(E)$ such that $I_\alpha(\mu)\geq I_\alpha[\nu]$, for all $\mu\in\mathcal{P}(E)$. Evidently, if $\nu$ is an equilibrium measure, the definition of capacity dictates that $\mathrm{cap}_{\alpha}(E)=1/I_\alpha[\nu]$. In particular, any compact set $E\subset \TT$ has an equilibrium measure, say $\nu$, which satisfies the following properties:
\begin{enumerate}
    \item[(i)] $K_{\alpha,\nu}(\zeta)\leq I_{\alpha}[\nu]$, for all $\zeta\in \TT$;
    \item[(ii)] $K_{\alpha,\nu}(\zeta)=I_\alpha[\nu],$ $\alpha$\textit{-almost everywhere} on $\TT$, that is, on the whole $\TT$ except possibly for a set whose Riesz $\alpha$-capacity is zero.
\end{enumerate}

The properties above constitute a slightly different version of Frostman's Theorem. For a profound account of potential theory and the rich theory of capacity, we point the interested reader to \cite{Frostman} and the books \cite{Carleson, Primer, Ransford}.

\subsection{Dirichlet-type spaces and cyclicity}\label{subsec:dirichlet}
Let $\mathrm{Hol}(\DD)$ denote the class of holomorphic in $\DD$ functions and by $A^k(\DD)$ the subset of $\mathrm{Hol}(\DD)$ consisting of all holomorphic functions with continuous derivatives of order less than or equal to $k$ on the closure of the unit disc. In the case $k=0$, we write $A(\DD)\vcentcolon=A^0(\DD)$ for the usual disc algebra. Furthermore, we use the notation $\mathcal{O}(\overline{\DD})$ for the class of all functions holomorphic in a neighborhood of $\overline{\DD}$. Given $\alpha\in\RR$, a function $f\in \Hol$ with power series expansion
\begin{equation*}
f(z)=\sum_{k=0}^{+\infty}a_kz^k, \quad z\in \DD,
\end{equation*}
is said to belong to the \textit{Dirichlet-type space} $\cD_\alpha$ provided that 
\begin{equation}\label{eq:norm}
\|f\|^2_\alpha\vcentcolon=\sum_{k=0}^{+\infty}(k+1)^\alpha|a_k|^2<+\infty.
\end{equation}
If $f\in\cD_\alpha$, a direct consequence of \eqref{eq:norm} is that $zf\in\cD_\alpha$ as well. In addition, \eqref{eq:norm} shows that the family of Dirichlet-type spaces is decreasing in the sense that $\cD_{\alpha_1}\subset \cD_{\alpha_2}$, for any pair of real numbers $\alpha_1>\alpha_2$, since evidently $\|\cdot\|_{\alpha_1}\ge\|\cdot\|_{\alpha_2}$. 

Cyclicity in Dirichlet-type spaces is usually defined with respect to the \textit{shift operator} given by $T:\cD_\alpha\rightarrow \cD_\alpha$, $T(f)\vcentcolon=zf$, which is a well-defined bounded linear transformation. We say that a function $f\in\cD_\alpha$, $\alpha\in \RR$, is a \textit{cyclic} vector for $\mathcal{D}_\alpha$ if 
\begin{equation*}
[f]_\alpha\vcentcolon=\mathrm{clos}\text{ }\mathrm{span}\{z^nf\vcentcolon n=0,1,...\},
\end{equation*}
where the closure is taken with respect to the norm, coincides with the whole space $\cD_\alpha$. The space $[f]_\alpha$ is closed and invariant under the action of the shift operator.

\begin{remark}\label{equiv cyclic}
It is worth noting that $f$ is cyclic in $\cD_\alpha$ if and only if there exists a sequence of polynomials $(p_n)_{n\in \mathbb N}$ such that $p_nf\rightarrow 1$ in the $\cD_\alpha$-norm. The same holds in case the convergence occurs weakly instead of in the $\cD_\alpha$-norm. Equivalently, $f$ is cyclic if and only if $1\in [f]_\alpha$. We refer to \cite[Proposition 5]{Brown-Shields} for more details.
\end{remark}

Additionally, a function $h$ of $\DD$ is called a \textit{multiplier} of $\cD_\alpha$ provided $hf\in\cD_\alpha$ whenever $f\in\cD_\alpha$. It is known by \cite[Proposition 7]{Brown-Shields} that multipliers leave any subspace $[f]_\alpha$ invariant. In other words, if $h$ is a multiplier and $g\in[f]_\alpha$, for some $f\in\cD_\alpha$, $\alpha\in\RR$, then $hg\in[f]_\alpha$ as well. We will need this property of multipliers towards the end of our investigation. For more information regarding Dirichlet-type spaces and the corresponding cyclicity, we refer to \cite{Aleman1, Beurling, Brown-Shields, Primer, Mironov, Richter1, Richter-Shields, Richter, Ross, Sampat, Taylor}.

From now on, we mostly restrict ourselves to the Dirichlet-type spaces $\cD_\alpha$, for $\alpha\in(0,1]$. Clearly $\|\cdot\|_\alpha$ defines a norm and hence the family $(\cD_\alpha,\|\cdot\|_\alpha)_{\alpha\in (0,1]}$ consists of Banach spaces of holomorphic functions.  Moreover, the membership of a function in these spaces can be identified through the computation of an integral. Indeed, for $\alpha\in(0,1]$, $f\in\cD_\alpha$ if and only if
\begin{equation}\label{eq:integral}
    \int_{\DD}|f'(z)|^2(1-|z|^2)^{1-\alpha}dA(z)<+\infty,
\end{equation}
where $dA$ denotes the area Lebesgue measure. Fix such an $\alpha$ and consider a function $f\in\cD_\alpha$. Then, its radial limit function $f^*$ is defined through $f^*(\zeta)\vcentcolon=\lim_{r\to1^-}f(r\zeta)$, for $\zeta\in\TT$ except possibly for a set of zero Riesz $\alpha$-capacity; see \cite[\S V, Theorem 3]{Carleson} which provides a sharper result than Fatou's Theorem. Besides, even if the function $f$ extends continuously to $\TT$, we will maintain the notation $f^*\vcentcolon=f_{|_{\TT}}$. As we said before, the cyclicity of $f$ in $\cD_\alpha$ strongly depends on the Riesz $\alpha$-capacity of the zero set $\mathcal{Z}(f^*)$. A first remarkable result, resulting from the groundbreaking work of Brown and Shields, and demonstrating this dependence is the following:

\begin{theorem}[{cf. \cite[Corollary 1, Theorem 5]{Brown-Shields},\cite[Theorem 1.1]{EFKR}}] \label{thm: efkr 1}
    Let $\alpha\in(0,1]$ and suppose that $f\in\mathcal{\cD}_\alpha$ is cyclic in $\cD_{\alpha}$. Then, $f$ is an outer function and $\mathrm{cap}_{\alpha}(\mathcal{Z}(f^*))=0$.
\end{theorem}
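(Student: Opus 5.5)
We prove the two conclusions separately, both by contradiction.

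\emph{Outerness.} Since $\alpha>0$, \eqref{eq:norm} gives $\|g\|_0\le\|g\|_\alpha$, so $\cD_\alpha\subset\cD_0=H^2$ with a continuous inclusion. If $p_nf\to1$ in $\cD_\alpha$, as in Remark \ref{equiv cyclic}, then $p_nf\to1$ in $H^2$. Hence $f$ is cyclic in the Hardy space, and Beurling's theorem \cite{Beurling} says $f$ is outer.

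\emph{Capacity of the zero set.} Suppose instead that $E:=\mathcal{Z}(f^*)$ satisfies $\mathrm{cap}_\alpha(E)>0$. Since $E$ is Borel up to a set of capacity zero, inner capacity lets us pick a compact $F\subset E$ with $\mathrm{cap}_\alpha(F)>0$. Let $\nu\in\mathcal{P}(F)$ have finite energy, for instance the equilibrium measure. From $\nu$ we build a bounded linear functional $\Lambda$ on $\cD_\alpha$ that annihilates $[f]_\alpha$ but satisfies $\Lambda(1)\neq0$; this contradicts $1\in[f]_\alpha$.

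The natural candidate is $\Lambda(g):=\int_\TT g^*\,d\nu$. To see that it is bounded, expand the kernel in Fourier series: $K_\alpha(|e^{it}|)$ has Fourier coefficients comparable to $(|k|+1)^{-\alpha}$ (respectively $1/|k|$ when $\alpha=1$). Therefore
\[
I_\alpha[\nu]\asymp\sum_k(|k|+1)^{-\alpha}|\hat\nu(k)|^2 .
\]
For a polynomial $g=\sum a_kz^k$ we have $\Lambda(g)=\sum_k a_k\overline{\hat\nu(k)}$. Cauchy--Schwarz then gives
\[
|\Lambda(g)|\lesssim\|g\|_\alpha\, I_\alpha[\nu]^{1/2}.
\]
So $\Lambda$ extends boundedly to $\cD_\alpha$, the extension agrees with $\int g^*\,d\nu$ (the radial limits exist quasi-everywhere, hence $\nu$-a.e., since $\nu$ has finite energy), and $\Lambda(1)=1$.

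To apply $\Lambda$ to $z^nf$ we need $\Lambda(z^nf)=\int\zeta^n f^*\,d\nu=0$, and here is the main obstacle. The identity $\Lambda(g)=\int g^*d\nu$ for all $g\in\cD_\alpha$ requires that polynomial approximants of $g$ converge quasi-everywhere, hence $\nu$-a.e., which needs a capacitary weak-type inequality. Granting that, $f^*=0$ on $F$ gives $\Lambda(z^nf)=0$ for every $n$, so $\Lambda$ vanishes on $[f]_\alpha$, a contradiction.

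I expect the approximation $p_nf\to1$ to need more care than this. If $p_nf\to1$ in norm, the capacitary inequality gives a subsequence converging quasi-everywhere. But $p_n^*f^*=0$ on $F$ while the limit equals $1$, a contradiction on the set $F$ of positive capacity. This route, following \cite[Theorem 5]{Brown-Shields}, is what I would actually write: the capacitary strong-type inequality for $\cD_\alpha$ via the Riesz potential representation of $g^*$ yields quasi-everywhere convergence of a subsequence of norm-convergent sequences, and that finishes the proof.
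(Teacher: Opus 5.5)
Your final argument is correct and matches the source the paper cites; the paper itself gives no proof here. Outerness follows from the contractive inclusion $\cD_\alpha\subset\cD_0=H^2$ and Beurling's theorem, and $\mathrm{cap}_\alpha(\mathcal{Z}(f^*))=0$ follows from the capacitary inequality: a subsequence of $(p_nf)^*$ must tend to $1$ quasi-everywhere, which is impossible on $\mathcal{Z}(f^*)$ where $(p_nf)^*=p_nf^*=0$, exactly as in \cite{Brown-Shields}. You can drop the detour through $\Lambda(g)=\int_\TT g^*\,d\nu$, since identifying its extension with $\int g^*\,d\nu$ needs the same machinery (really an $L^1(\nu)$ bound, not just $\nu$-a.e.\ convergence), and the maximal-function weak-type bound gives $\mathrm{cap}_\alpha(\mathcal{Z}(f^*))\le C\|p_nf-1\|_\alpha^2\to0$ directly, with no compact $F$ or measure $\nu$ required.
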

A partial converse statement is provided through \cite[Theorem 1.2]{EFKR} which we state below. We also refer to \cite[Corollary 1.2]{EFKR*} for the classical Dirichlet space $\cD$.
\begin{theorem}\label{thm: efkr 2}
    Let $\alpha\in(0,1]$ and $f\in\mathcal{\cD}_\alpha$. Suppose that
    \begin{enumerate}
        \item[\textup{(i)}] $f$ is an outer function;
        \item[\textup{(ii)}] $|f|$ extends continuously to $\overline{\DD}$;
        \item[\textup{(iii)}] $\mathcal{Z}(f^*)$ is contained in a generalized Cantor set of zero Riesz $\alpha$-capacity.
    \end{enumerate}
    Then, $f$ is cyclic in $\cD_\alpha$.
\end{theorem}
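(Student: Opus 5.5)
This is \cite[Theorem 1.2]{EFKR}, so we may quote it. To see how a proof would go, I would follow the strategy of El-Fallah, Kellay, Ransford and collaborators.

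The first step is to reduce to a model function built from $E$, the generalized Cantor set containing $\mathcal{Z}(f^*)$ with $\mathrm{cap}_\alpha(E)=0$. Since $f$ is outer, $|f|$ is continuous on $\overline{\DD}$, and $|f^*|$ vanishes only on a closed subset of $E$, I would compare $f$ with outer functions $f_\varepsilon$ whose moduli are $\max(|f^*|,\varepsilon)$ on $\TT$. By a Brown--Shields type argument, cyclicity of $f$ follows if $f\cdot g$ is cyclic for a suitable bounded outer function $g$ with $fg \in [f]_\alpha$. Here one uses that outer functions with moduli comparable to distances to $E$ are multipliers on the relevant invariant subspaces (cf.\ \cite[Proposition 7]{Brown-Shields}), together with the Richter--Sundberg/Aleman inequality
\[
\mathcal{D}_\alpha(\min(|f|,|g|)) \lesssim \mathcal{D}_\alpha(f)+\mathcal{D}_\alpha(g)
\]
for outer functions, applied via the local Dirichlet integral formula. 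The upshot is that it suffices to show that outer functions $h$ with $|h^*| \asymp \mathrm{dist}(\cdot,E)^{\beta}$ for large $\beta$ lie in $[f]_\alpha$, and are themselves cyclic.

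The second step uses the vanishing capacity. From $\mathrm{cap}_\alpha(E)=0$ one gets, for every $M$, functions $\phi\in\cD_\alpha$ (potentials of equilibrium measures of neighbourhoods $E_\delta$) with $\mathrm{Re}\,\phi\ge M$ on $E_\delta$ and $\|\phi\|_\alpha$ bounded. This uses Frostman's properties (i)--(ii) above: for a neighbourhood $E_\delta$, the equilibrium potential normalized by $I_\alpha[\nu_\delta]\to\infty$ equals $1$ on $E_\delta$ and has small energy. I would then form outer functions $h_\delta$ with $|h_\delta^*|=\max(|f^*|,\,\text{cutoff built from } e^{-\phi_\delta})$, or more precisely multiply $f$ by $\exp(-c\,\phi_\delta)$-type corrections, to produce $p_n f\to 1$ weakly. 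The Cantor structure is essential at this point. For a generalized Cantor set, which is Carleson, the complementary arcs $I_j$ satisfy $\sum |I_j|\log(1/|I_j|)<\infty$, and its $\delta$-neighbourhoods are finite unions of arcs at each generation. This makes the local Dirichlet integrals of $\max(|f^*|,\,\text{cutoff})$ computable explicitly and comparable to the $\alpha$-capacity of the generation-$n$ approximants.

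The main obstacle is the uniform bound $\sup_n \|f\,\psi_n\|_\alpha<\infty$, where the $\psi_n$ are the outer correctors that tend to $1/f$ away from $E$. Weak convergence to $1$ is then automatic, by Remark \ref{equiv cyclic}. For this bound I would first use the Carleson-type formula for $\mathcal{D}_\alpha$ of an outer function in terms of boundary values. Then I would split $\TT$ into the generation-$n$ arcs and the removed gaps. On the gaps the bound follows from continuity of $|f|$. On the arcs it reduces to an energy estimate for the uniform Cantor measure at level $n$, which is exactly controlled by $\mathrm{cap}_\alpha(E)=0$ through the self-similar structure of the generalized Cantor set.
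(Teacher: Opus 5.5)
Your proposal is correct and matches the paper: the paper does not prove this statement but quotes it as \cite[Theorem 1.2]{EFKR}, exactly as your first sentence does. The heuristic sketch that follows is not needed and should not be read as a proof. Your ``reduction'' to showing that a cyclic outer $h$ with $|h^*|\asymp\mathrm{dist}(\cdot,E)^{\beta}$ lies in $[f]_\alpha$ is not a reduction at all, since for cyclic $h$ this is equivalent to cyclicity of $f$; moreover, the Carleson-type formula and local Dirichlet integral you invoke are $\alpha=1$ tools, and their absence for $\alpha<1$ is precisely what forced EFKR to develop different techniques.
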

The two theorems above and this interplay between cyclicity and Riesz $\alpha$-capacity will be crucial in the sequel.

\section{Generalized Cantor sets}\label{sec:cantor}

We are ready to proceed to the main body of the article and the first objective of our work. In this section, we will describe the general process of constructing generalized Cantor sets, and then we will construct the explicit examples that we desire.

Since our Cantor sets will be eventually used as zero sets of radial limit functions, we want them to be subsets of the unit circle. Nevertheless, we will define them, as usual, on the rectilinear segment $[0,1]$, and then take their image through the transformation $T(\theta)=e^{i2\pi\theta}$, $\theta\in[0,1]$.

We mostly follow \cite[Section 2.3]{Primer}. To construct a generalized Cantor set, in the first step, we remove from $E_0^1\vcentcolon=[0,1]$ a middle open segment of length $e_0\in(0,1)$ so that two closed segments of equal length $d_1$ remain. We denote them by $E_1^1$ and $E_1^2$. Then, from each set $E_1^j$, $j=1,2$, we remove a middle open segment of length $e_1\in(0,d_1)$, so that in total four closed segments of equal length $d_2$ remain, which we denote by $E_2^j$, $j=1,2,3,4$. In general, in the $n$-th step of the construction, from each of the existing sets $E_{n-1}^j$, $j=1,\dots,2^{n-1}$ (who have equal length $d_{n-1}$), we remove a middle open segment of length $e_{n-1}\in(0,d_{n-1})$, so that the sets $E_n^j$, $j=1,\dots,2^n$, remain and they all have equal length $d_n$. Set $E_n=\cup_{j=1}^{2^n}E_n^j$, $n\in\mathbb{N}\cup\{0\}$. Then, the \textit{generalized (middle third) Cantor set} associated to the sequence $\{E_n\}_{n\ge0}$ is the set 
\begin{equation*}
    E\vcentcolon=\bigcap_{n=0}^{+\infty}E_n=\bigcap_{n=0}^{+\infty}\bigcup_{j=1}^{2^n}E_n^j. 
\end{equation*}
Setting $d_0=1$ and due to the way we constructed the Cantor set $E$, we may see that $d_n=\mathrm{diam}E_n^j$, for all $n\in\mathbb{N}\cup\{0\}$, for any $j=1,\dots,2^n$. In addition, we may write $e_n=\min\{\mathrm{dist}(E_{n+1}^j,E_{n+1}^k)\vcentcolon j\ne k, \,E_{n+1}^j\cup E_{n+1}^k\subseteq E_n^i\text{ for some } i\in1,\dots,2^n\}$, for each $n\in\mathbb{N}$. By construction, the two sequences $\{d_n\}$ and $\{e_n\}$ are related through the formula
\begin{equation}\label{eq:cantor sequences}
  e_n=d_n-2d_{n+1}, \quad n\in\mathbb{N}\cup\{0\}.
\end{equation}

\begin{remark}\label{rem:cantor}
Clearly, the set $E$ is linked to the sequences $\{d_n\}$ and $\{e_n\}$. As a matter of fact, picking two such sequences, following the process described above leads to the construction of a generalized Cantor set in $[0,1]$. Of course, the sequences need to fulfill certain requirements. Firstly, we need $d_0=1$ and $\{d_n\}_{n\in\mathbb{N}}\subset(0,1)$. In addition, $d_n$ need to decrease (not necessarily strictly) to $0$. Finally, the existence of the positive numbers $e_n$, $n\in\mathbb{N}\cup\{0\}$, means that $d_n-2d_{n+1}>0$, for all $n\in\mathbb{N}\cup\{0\}$.
\end{remark}

Having our set on the segment $[0,1]$ ready, we can move to the unit circle. The set $T(E)$ associated to the sequence $\{T(E_n)\}=\{\cup_{j=1}^{2^n}T(E_n^j)\}$ is a \textit{generalized Cantor set} on $\TT$. In similar fashion to the case of the real Cantor set above, set $\delta_n\vcentcolon= \mathrm{diam}T(E_n^j)$, for any $j=1,\dots,2^n$, for all $n\in\mathbb{N}\cup\{0\}$. Note that since our generalized Cantor sets are produced by removing middle third open segments in each step, for each $n$, the diameters of the sets $T(E_n^1),\dots,T(E_n^{2^n})$, are equal. Moreover, for each $n$, set
\begin{equation*}
    \epsilon_n \vcentcolon= \min\{\mathrm{dist}(T(E_{n+1}^j),T(E_{n+1}^k))\vcentcolon j\ne k, \, E_{n+1}^j,E_{n+1}^k\subseteq E_n^i \text{ for some }i=1,\dots,2^n\}.
\end{equation*}
Evidently, the numbers $\delta_n$ and $\epsilon_n$ are related to the numbers $d_n$ and $e_n$, respectively. We prove a first auxiliary lemma demonstrating this relation.

\begin{lemma}\label{lem:relation of Cantor sets}
    Let $E$ be a generalized Cantor set on the segment $[0,1]$ associated to $\{E_n\}$ and $T\vcentcolon[0,1]\to\TT$ the transformation with $T(\theta)=e^{i2\pi\theta}$. Consider $d_n,e_n,\delta_n,\epsilon_n$ to be as above. Then:
    \begin{enumerate}
        \item[\textup{(a)}] $|T(\theta_2)-T(\theta_1)|=2\sin(\pi|\theta_2-\theta_1|)$, for all $\theta_1,\theta_2\in[0,1]$.
        \item[\textup{(b)}] There exists an absolute constant $c>1$ such that $\frac{1}{c}d_n \le \delta_n \le c d_n$, for all $n\in\mathbb{N}$.
        \item[\textup{(c)}] There exists an absolute constant $c'>1$ such that $\frac{1}{c'}e_n \le \epsilon_n \le c' e_n$, for all $n\in\mathbb{N}$.
    \end{enumerate}
\end{lemma}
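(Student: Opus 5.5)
Part (a) comes from writing $T(\theta_2)-T(\theta_1)=e^{i2\pi\theta_1}\bigl(e^{i2\pi(\theta_2-\theta_1)}-1\bigr)$. Taking moduli gives $|e^{i2\pi t}-1|=|e^{i\pi t}-e^{-i\pi t}|=2|\sin(\pi t)|$ with $t=\theta_2-\theta_1$. Since $|t|\le1$, we have $\sin(\pi|t|)\ge0$, and because $\sin$ is odd, $2|\sin(\pi t)|=2\sin(\pi|t|)$.

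For (b) and (c) we use (a) together with the elementary bounds $2x\le\sin(\pi x)\le\pi x$ for $x\in[0,1/2]$; the lower bound follows from concavity of $\sin(\pi x)$ on $[0,1/2]$. These give
\[
4|\theta_2-\theta_1|\le|T(\theta_2)-T(\theta_1)|\le2\pi|\theta_2-\theta_1|\qquad\text{whenever }|\theta_2-\theta_1|\le1/2 .
\]
It remains to check that every relevant pair of parameters differs by at most $1/2$, and that the relevant distances are realized by such pairs. For $n\ge1$, each $E_n^j$ is an interval of length $d_n\le d_1<1/2$, since $2d_1=1-e_0<1$. Hence $\delta_n=\max_{\theta_1,\theta_2\in E_n^j}2\sin(\pi|\theta_2-\theta_1|)=2\sin(\pi d_n)$, because $\sin(\pi\cdot)$ is increasing on $[0,1/2]$. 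This gives $4d_n\le\delta_n\le2\pi d_n$, so (b) holds with, say, $c=2\pi$.

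For (c), fix $n\ge1$ and two sibling intervals $E_{n+1}^j,E_{n+1}^k\subseteq E_n^i$. Every point of one and every point of the other lie in $E_n^i$, so their parameters differ by at most $d_n<1/2$. The distance $\mathrm{dist}(T(E_{n+1}^j),T(E_{n+1}^k))$ is the minimum of $2\sin(\pi|\theta_2-\theta_1|)$ over such pairs. By monotonicity this minimum equals $2\sin(\pi e_n)$, since the minimum parameter gap between siblings is exactly the removed gap $e_n$. All sibling pairs at level $n$ have the same gap $e_n$, so $\epsilon_n=2\sin(\pi e_n)\in[4e_n,2\pi e_n]$, and (c) holds with $c'=2\pi$.

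The only point needing care is that the chord distance on $\TT$ is not the minimum over the two ways around the circle in parameter terms; for instance $T(0)=T(1)$. This is harmless here, because the parameter distances involved are all at most $d_n<1/2$, and in this range $2\sin(\pi|t|)$ is an increasing function of $|t|$ comparable to $|t|$. The case $n=0$, where $d_0=1$, is excluded in the statement, which is consistent with this. The remaining work is the routine verification of $2x\le\sin(\pi x)$, and I expect no real obstacle.
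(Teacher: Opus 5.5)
Your proof is correct and follows the paper's outline: the chord formula in (a), then the identities $\delta_n=2\sin(\pi d_n)$ and $\epsilon_n=2\sin(\pi e_n)$, then a comparison of $\sin(\pi x)$ with $x$. The only difference is in the last step. The paper uses the limit $2\sin(\pi d_n)/d_n\to 2\pi$, which by itself only gives a constant depending on the particular sequence $\{d_n\}$. Your bounds $2x\le\sin(\pi x)\le\pi x$ on $[0,1/2]$, together with your explicit check that $d_n\le d_1<1/2$ for $n\ge 1$, give the genuinely absolute constant $c=c'=2\pi$ that the statement claims, and they also settle (c), which the paper leaves to the reader.
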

\begin{proof}
    (a) Fix $\theta_1,\theta_2\in[0,1]$. Through trivial calculations, $|T(\theta_2)-T(\theta_1)|=|e^{i2\pi\theta_1}-e^{i2\pi\theta_2}|=\sqrt{2-2\cos(2\pi\theta_1)\cos(2\pi\theta_2)-2\sin(2\pi\theta_1)\sin(2\pi\theta_2)}$. The trigonometric identities $\cos x \cos y+\sin x\sin y=\cos(x-y)$ and $2\sin^2(x/2)=1-\cos x$, provide the desired outcome. 

    (b) Fix $n\in\mathbb{N}$. As we said before, $d_n=\mathrm{diam}E_n^j$ and $\delta_n=\mathrm{diam}T(E_n^j)$, for any choice of $j=1,\dots,2^n$. Pick such a $j$ in random and let $\alpha_n$ and $\beta_n$ be the endpoints of the closed segment $E_n^j$. Obviously, $d_n=|\alpha_n-\beta_n|$, while $\delta_n=|T(\alpha_n)-T(\beta_n)|$, with the second equality holding since the set $E_n^j$ is necessarily smaller than a half-circle. So, due to (a), we have $\delta_n=2\sin(\pi d_n)$. But in order to have the Cantor set $E$, it is evident that $\lim_{n\to+\infty}d_n=0$. As a result,
    \begin{equation*}
        \lim_{n\to+\infty}\frac{\delta_n}{d_n}=\lim_{n\to+\infty}\frac{2\sin(\pi d_n)}{d_n}=2\pi.
    \end{equation*}
    This last limit proves the existence of the desired absolute constant $c$. Note that for $n=0$, the non-univalence of $T$ yields $\delta_0=0$, while $d_n>0$. Excluding this case, the constant exists for all $n\in\mathbb{N}$.

    (c) The proof follows the same procedure as in (b). So we omit it, for the sake of avoiding repetition.
\end{proof}

With Lemma \ref{lem:relation of Cantor sets} in our quiver, we can state a variant of \cite[Theorem 2.3.5]{Primer} which allows us to estimate the Riesz $\alpha$-capacity, $\alpha\in(0,1]$, of a generalized Cantor type set $E$ on $\TT$, just by observing its pre-image $T^{-1}(E)$ on $[0,1]$. This is because by (b) and (c) of the previous lemma, the sequences $\{d_n\}$, $\{e_n\}$ and $\{\delta_n\}$, $\{\epsilon_n\}$, respectively, are Lipschitz equivalent.

\begin{lemma}[cf. {\cite[Theorem 2.3.5]{Primer}}]\label{lem:capacity of Cantor}
    Let $E$ be a generalized Cantor set on the segment $[0,1]$ and $T\vcentcolon[0,1]\to\TT$ the transformation $T(\theta)=e^{i2\pi\theta}$. Consider $d_n$ and $e_n$ to be as above. Then, for any $\alpha\in(0,1]$,
    \begin{equation}\label{eq:capacity Cantor}
        \sum_{n=0}^{+\infty}\frac{K_\alpha(d_n)}{2^{n+1}} \le \frac{1}{\mathrm{cap}_\alpha(T(E))} \le \sum_{n=0}^{+\infty}\frac{K_\alpha(e_n)}{2^n},
    \end{equation}
    where $K_\alpha:(0,+\infty)\to[0,+\infty)$ is the kernel defined in \eqref{eq:kernel}.
\end{lemma}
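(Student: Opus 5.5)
The plan is to adapt the proof of \cite[Theorem 2.3.5]{Primer} directly on $\TT$. The idea is to split the double integral defining $I_\alpha[\mu]$ according to the first generation at which two points of $T(E)$ fall into different pieces. The only inputs are that $K_\alpha$ is non-increasing on $(0,+\infty)$ and Lemma \ref{lem:relation of Cantor sets}(a).

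For $n\ge 0$ write $A_n$ for the set of pairs $(\zeta,\eta)\in T(E)\times T(E)$ lying in a common piece $T(E_n^j)$. This is a decreasing family with $A_0=T(E)^2$ and $\bigcap_n A_n$ equal to the diagonal.

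\emph{Upper bound.} I test the definition of capacity with the natural Cantor measure $\nu$, which gives mass $2^{-n}$ to each $T(E_n^j)$. A pair in $A_n\setminus A_{n+1}$ lies in one $T(E_n^j)$ but in the two different children of that piece. Hence its distance is at least $\epsilon_n$, and by Lemma \ref{lem:relation of Cantor sets}(a) together with $2\sin(\pi t)\ge 4t\ge t$ for $t\le 1/2$, this distance is also at least $e_n$. Monotonicity of $K_\alpha$ then gives $K_\alpha(|\zeta-\eta|)\le K_\alpha(e_n)$ on $A_n\setminus A_{n+1}$. The set $A_n\setminus A_{n+1}$ has $(\nu\times\nu)$-mass $2^n\cdot 2\cdot 2^{-2(n+1)}=2^{-n-1}$. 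The diagonal is $\nu\times\nu$-null because $\nu$ has no atoms. Summing over $n$,
\begin{equation*}
\frac{1}{\mathrm{cap}_\alpha(T(E))}\le I_\alpha[\nu]\le\sum_{n\ge0}\frac{K_\alpha(e_n)}{2^{n+1}}\le\sum_{n\ge0}\frac{K_\alpha(e_n)}{2^{n}}.
\end{equation*}

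\emph{Lower bound.} Let $\mu\in\mathcal P(T(E))$ be arbitrary, put $s_n:=\sum_j\mu(T(E_n^j))^2$, and set $K_\alpha(d_{-1}):=0$. On $A_n$ I want $K_\alpha(|\zeta-\eta|)\ge K_\alpha(d_n)$. Granting this, the layer-cake identity
\begin{equation*}
K_\alpha(|\zeta-\eta|)\ge\sum_{n\ge0}\bigl(K_\alpha(d_n)-K_\alpha(d_{n-1})\bigr)\mathbb 1_{A_n}(\zeta,\eta)
\end{equation*}
holds, since the increments are non-negative because $d_n$ decreases. Integrating gives $I_\alpha[\mu]\ge\sum_n\bigl(K_\alpha(d_n)-K_\alpha(d_{n-1})\bigr)s_n$. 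Cauchy--Schwarz over the $2^n$ pieces gives $s_n\ge 2^{-n}$. Abel summation then turns $\sum_n\bigl(K_\alpha(d_n)-K_\alpha(d_{n-1})\bigr)2^{-n}$ into $\sum_n K_\alpha(d_n)\bigl(2^{-n}-2^{-n-1}\bigr)=\sum_n K_\alpha(d_n)2^{-n-1}$. This truncated summation by parts is valid term by term because all terms are non-negative. Taking the infimum over $\mu$ gives the left inequality of \eqref{eq:capacity Cantor}.

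\emph{Main obstacle.} The delicate point is the claim $K_\alpha(|\zeta-\eta|)\ge K_\alpha(d_n)$ on $A_n$. In the chordal metric the diameter of $T(E_n^j)$ is $\delta_n=2\sin(\pi d_n)$, which can be as large as about $2\pi d_n>d_n$. So monotonicity alone only yields $K_\alpha(2\pi d_n)$, a constant-factor loss, and not $K_\alpha(d_n)$. To obtain the inequality exactly as stated, I would first try to exploit the extra room in the lower bound. One option is to use $s_n\ge2^{-n}$ more sharply, or to shift the index by one generation, since $\delta_{n+1}\le d_n$ whenever $2\pi d_{n+1}\le d_n$. If that fails, the fallback is to measure $|\zeta-\eta|$ by normalized arc length, which is the metric in which the transfer from $[0,1]$ is isometric. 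Lemma \ref{lem:relation of Cantor sets}(b),(c) show this changes $I_\alpha$ only by a bounded factor, which suffices for all later uses, namely deciding whether the capacity vanishes. I expect this bookkeeping of chord versus arc, rather than the combinatorics, to be the real work.
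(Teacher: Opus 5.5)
\textbf{The lower bound.} The step you flagged is a genuine gap, and it cannot be closed, because the left inequality in \eqref{eq:capacity Cantor} is false with these exact constants. Since $2d_{n+1}<d_n$ and $d_0=1$, we have $d_n\le 2^{-n}$. So for $\alpha=1$ the left-hand side is always at least $\sum_{n\ge0}(n+1)\log 2/2^{n+1}=2\log 2$. Now take $d_n=2^{-n}\bigl(\tfrac34+\tfrac14\,2^{-n}\bigr)$. Then $e_n=2^{-2n-3}>0$, and $E$ has length $\lim_n 2^nd_n=\tfrac34$. Test with normalized arc length on $T(E)$. The kernel $\log\frac{2}{|\zeta-\eta|}$ is nonnegative on $\TT\times\TT$ with mean $\log 2$, so $1/\mathrm{cap}_1(T(E))\le\frac{16}{9}\log 2<2\log 2$. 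The same happens for $\alpha=\frac12$: the left side is always at least $1/(2-\sqrt2)\approx 1.71$, while the energy of arc length on $\TT$ is about $1.18$, so sufficiently fat Cantor sets violate it there too.

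Consequently neither rescue idea can work. Shifting indices needs $2\pi d_{n+1}\le d_n$, which is not guaranteed. Sharpening $s_n\ge 2^{-n}$ is impossible, since it is an equality for the natural measure. What your layer-cake argument does prove, verbatim, uses $|\zeta-\eta|\le 2\pi d_n$ on $A_n$ (trivial for $n=0$):
\[
\frac{1}{\mathrm{cap}_\alpha(T(E))}\ge\sum_{n\ge0}\frac{K_\alpha(2\pi d_n)}{2^{n+1}}.
\]
Concretely, this is $(2\pi)^{\alpha-1}\sum_n K_\alpha(d_n)2^{-n-1}$ for $\alpha<1$, and at least $\sum_n K_1(d_n)2^{-n-1}-\log(2\pi)$ for $\alpha=1$.

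This constant-loss version is also all that the paper's own justification can deliver. That justification combines \cite[Theorem 2.3.5]{Primer} with the Lipschitz equivalence of Lemma \ref{lem:relation of Cantor sets}(b),(c), which is stated only for $n\ge1$. It suffices for Propositions \ref{prop:cantor closed}, \ref{construction Cantor 2} and \ref{construction cantor a 1}, since these only use divergence of the left series and convergence of the right one. So your fallback is the right endpoint, but you should state and prove the lemma in that weakened form.

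\textbf{The upper bound at level $n=0$.} Your upper bound also breaks at $n=0$. Because $T(0)=T(1)=1$, the arcs $T(E_1^1)$ and $T(E_1^2)$ share the endpoint $1$, so $\epsilon_0=0$. For pairs in $A_0\setminus A_1$ the parameters satisfy $|\theta_1-\theta_2|\in[e_0,1]$, so $2\sin(\pi t)\ge 4t$ is not available. The chord can in fact be arbitrarily small: take $\theta_1=d_m$ and $\theta_2=1-d_m$ with $m$ large, which gives chord $2\sin(2\pi d_m)$. Thus $K_\alpha(|\zeta-\eta|)\le K_\alpha(e_0)$ fails on $A_0\setminus A_1$. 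For the same reason $T$ is not an isometry for normalized arc length, as your fallback assumes.

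The fix is to test with the normalized natural measure $\nu^{(1)}$ on $T(E\cap E_1^1)$ instead. All parameters then lie in $[0,d_1]$ with $d_1<\frac12$, so your chord estimate applies at every level $n\ge1$. The same counting gives
\[
\frac{1}{\mathrm{cap}_\alpha(T(E))}\le I_\alpha[\nu^{(1)}]\le\sum_{n\ge1}\frac{K_\alpha(e_n)}{2^{n}},
\]
which implies the right-hand inequality.
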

Lemma \ref{lem:capacity of Cantor} will play a crucial role in our aim of constructing appropriate Cantor sets satisfying specific capacity requirements. Its utility hinges on the fact that if the left-hand side of \eqref{eq:capacity Cantor} diverges, then $\mathrm{cap}_\alpha(T(E))=0$, whereas if the corresponding right-hand side converges, then $\mathrm{cap}_\alpha(T(E))>0$. Note that the capacity of $T(E)\subset\TT$ is evaluated in terms of the properties of $E\subset[0,1]$. Another viable option would have been to define capacity in general, even for subsets of $[0,1]$ and then correlate the positivity of the Riesz $\alpha$-capacity of a set in $[0,1]$ with that of its image in $\TT$; see \cite[Proposition 2]{Vavitsas-Zarvalis} for a relevant application.

We now construct our first generalized Cantor set. We exclude the case $\alpha^*=1$ since the corresponding result is trivial.

\begin{proposition}\label{prop:cantor closed}
    Fix $\alpha^*\in(0,1)$. There exists a generalized Cantor set $F\subset\TT$ such that $\mathrm{cap}_\alpha(F)=0$, for all $\alpha\in(0,\alpha^*]$, whereas $\mathrm{cap}_\alpha(F)>0$, for all $\alpha\in(\alpha^*,1]$.
\end{proposition}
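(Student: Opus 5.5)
We will build $F=T(E)$ for a generalized Cantor set $E\subset[0,1]$ whose lengths sit just above the critical Riesz $\alpha^*$ size, and then read off both conclusions from Lemma \ref{lem:capacity of Cantor}. Set $s:=1-\alpha^*\in(0,1)$. For $\alpha<1$ we have $K_\alpha(t)=t^{\alpha-1}$, so $\sum_n K_\alpha(d_n)2^{-n}$ diverges or converges according to whether $d_n^{\alpha-1}2^{-n}$ decays slower or faster than geometrically. The borderline choice $d_n\approx 2^{-n/s}$ gives a general term of size $1$ at $\alpha=\alpha^*$. To force divergence at $\alpha^*$ while keeping convergence for every larger $\alpha$, we insert a polynomial correction. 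We take
\begin{equation*}
d_n=2^{-n/s}\,(n+1)^{1/s},\quad n\ge1,\qquad d_0=1,
\end{equation*}
possibly after discarding finitely many initial steps, for instance by fixing $d_n$ from some $n_0$ on and interpolating geometrically before that. With this choice $d_n^{\alpha^*-1}2^{-n}=d_n^{-s}2^{-n}=(n+1)^{-1}$.

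First, check the admissibility conditions of Remark \ref{rem:cantor}: $d_n\in(0,1)$, $d_n\downarrow0$, and $d_n-2d_{n+1}>0$. The ratio is
\begin{equation*}
\frac{d_{n+1}}{d_n}=2^{-1/s}\Big(\frac{n+2}{n+1}\Big)^{1/s}.
\end{equation*}
Since $1/s>1$, we have $2^{-1/s}<1/2$, and the correction factor tends to $1$. Hence $d_{n+1}/d_n\le \rho<1/2$ for all $n\ge n_0$. The first $n_0$ steps can be adjusted by hand, e.g.\ by rescaling so that $d_n=C2^{-n/s}(n+1)^{1/s}$ with the initial terms chosen with ratio $<1/2$. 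It also follows that $e_n=d_n-2d_{n+1}\ge(1-2\rho)d_n$, so $e_n\asymp d_n$.

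Second, the vanishing part. For $\alpha\in(0,\alpha^*]$ the left side of \eqref{eq:capacity Cantor} is
\begin{equation*}
\sum_n \tfrac12 d_n^{\alpha-1}2^{-n}\ \ge\ \tfrac12\sum_n d_n^{\alpha^*-1}2^{-n}=\tfrac12\sum_n (n+1)^{-1}=+\infty,
\end{equation*}
where the inequality uses $d_n\le1$ and $\alpha-1\le\alpha^*-1$. Therefore $\mathrm{cap}_\alpha(F)=0$.

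Third, the positive part. For $\alpha\in(\alpha^*,1)$ we use the right side together with $e_n\asymp d_n$:
\begin{equation*}
e_n^{\alpha-1}2^{-n}\lesssim d_n^{\alpha-1}2^{-n}=2^{n\left((1-\alpha)/s-1\right)}(n+1)^{(\alpha-1)/s}.
\end{equation*}
Here $(1-\alpha)/s<1$, so the terms decay geometrically, the sum converges, and $\mathrm{cap}_\alpha(F)>0$. For $\alpha=1$ we have $K_1(e_n)=\log(2/e_n)\lesssim n$, so $\sum n2^{-n}<\infty$ and the logarithmic capacity is positive as well. Alternatively, $\alpha=1$ follows from monotonicity, since $K_1$ is dominated by a constant times $K_\alpha$ on $(0,2]$.

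The main obstacle is really only the bookkeeping in the first step: ensuring the constraints $d_n-2d_{n+1}>0$ and $d_n<1$ hold from $n=0$, and that $e_n$ is comparable to $d_n$ uniformly, which the right-hand estimate needs. The plan is to handle this by showing the ratio bound $\rho<1/2$ eventually and then prepending finitely many harmless initial steps. The capacity conclusions depend only on the tails of the series, so this modification does not affect them.
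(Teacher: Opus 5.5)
Your argument is correct and follows the paper's route, feeding a geometric-type choice of $d_n$ into both sides of Lemma \ref{lem:capacity of Cantor}; the difference is the polynomial correction, which is superfluous. With the paper's choice $d_n=2^{-n/(1-\alpha^*)}$ the terms $K_{\alpha^*}(d_n)2^{-(n+1)}$ are identically $1/2$, so the series already diverges at $\alpha^*$, and your factor $(n+1)^{1/s}$ actually pushes toward convergence, leaving only a harmonic series. Dropping it also removes the bookkeeping you flag: then $d_0=1$ and $e_n=\bigl(1-2^{-\alpha^*/(1-\alpha^*)}\bigr)d_n>0$ for every $n$, whereas your formula gives $d_1=1$ and forces the ad hoc modification of the initial steps.
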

\begin{proof}
In order to achieve our objective, we will first construct the corresponding generalized Cantor set of the segment $[0,1]$. For $n\in\mathbb{N}\cup\{0\}$ set 
\begin{equation}\label{eq:exercise 1}
   d_n\vcentcolon=\left(2^{-\frac{1}{1-\alpha^*}}\right)^n=2^\frac{n}{\alpha^*-1}. 
\end{equation}
Since $1-\alpha^*\in(0,1)$, the sequence $\{d_n\}$ lies in $(0,1]$ and strictly decreases from $1$ to $0$. On top of that, for each $n\in\mathbb{N}\cup\{0\}$,
\begin{align}\label{eq:exercise 2}
\notag    e_n\vcentcolon&=d_n-2d_{n+1}=2^\frac{n}{\alpha^*-1}-2^{1+\frac{n+1}{\alpha^*-1}}\\
  &=2^\frac{n}{\alpha^*-1}\left(1-2^{1+\frac{1}{\alpha^*-1}}\right)=2^\frac{n}{\alpha^*-1}\left(1-2^{\frac{\alpha^*}{\alpha^*-1}}\right)>0,
\end{align}
where the positivity is due to the fact that $\alpha^*/(\alpha^*-1)<0$. As a result, through the process we described earlier, the sequences $\{d_n\}$ and $\{e_n\}$ define a generalized Cantor set $E$ in $[0,1]$. Write $F\vcentcolon=T(E)$, where as usual, $T(\theta)=e^{i2\pi\theta}$, $\theta\in[0,1]$. We will now use Lemma \ref{lem:capacity of Cantor} to estimate the Riesz $\alpha$-capacity of $F$, for the indices $\alpha\in(0,1]$.

We first deal with the case $\alpha\in(0,\alpha^*]$, so we aim to use the left-hand side of \eqref{eq:capacity Cantor}. Utilizing also the definition in \eqref{eq:kernel}, we have
\begin{align}\label{eq:exercise 3}
 \notag   \sum_{n=0}^{+\infty}\frac{K_\alpha(d_n)}{2^{n+1}}&=\frac{1}{2}\sum_{n=0}^{+\infty}\frac{1}{d_n^{1-\alpha}2^n}=\frac{1}{2}\sum_{n=0}^{+\infty}\frac{1}{2^{-n\frac{1-\alpha}{1-\alpha^*}}2^n}\\
    &=\frac{1}{2}\sum_{n=0}^{+\infty}\frac{1}{2^{n\left(1-\frac{1-\alpha}{1-\alpha^*}\right)}}=\frac{1}{2}\sum_{n=0}^{+\infty}\frac{1}{2^{n\frac{\alpha-\alpha^*}{1-\alpha^*}}},
\end{align}
where we have applied \eqref{eq:exercise 1}. Nevertheless, the exponent of $2$ in the latter sum of \eqref{eq:exercise 3} is non-positive, and hence the sum diverges. By Lemma \ref{lem:capacity of Cantor}, we obtain $\mathrm{cap}_\alpha(F)=0$.

On the other hand, for $\alpha\in(\alpha^*,1)$, we are going to need the right-hand side of \eqref{eq:capacity Cantor}. This time, using \eqref{eq:exercise 2}, we infer
\begin{align*}
    \sum_{n=0}^{+\infty}\frac{K_{\alpha}(e_n)}{2^n}&=\sum_{n=0}^{+\infty}\frac{1}{e_n^{1-\alpha}2^n}=\sum_{n=0}^{+\infty}\frac{1}{2^{-n\frac{1-\alpha}{1-\alpha^*}}\left(1-2^{\frac{\alpha^*}{\alpha^*-1}}\right)^{1-\alpha}2^n}\\
    &=\frac{1}{\left(1-2^{\frac{\alpha^*}{\alpha^*-1}}\right)^{1-\alpha}}\sum_{n=0}^{+\infty}\frac{1}{2^{n\frac{a-\alpha^*}{1-\alpha^*}}}.
\end{align*}
This time, the exponent of $2$ in the latter sum is strictly positive, regardless of $n$, so the sum converges. Thus, Lemma \ref{lem:capacity of Cantor} certifies that $\mathrm{cap}_\alpha(F)>0$.

Finally, for the case $\alpha=1$, following a similar procedure with the logarithmic capacity, we may once again prove that $\mathrm{cap}_1(F)>0$. As a consequence, the set $F$, which by construction is a generalized Cantor set of $\TT$, satisfies the desired conditions.
\end{proof}

\begin{remark}
    The usual Cantor set $C$ on $\TT$ is a well-known example of a set satisfying the condition described by Proposition \ref{prop:cantor closed} for $\alpha^*=1-\log2/\log3$; cf. \cite[p. 27]{Primer}. 
\end{remark}

We proceed to a second, more involved, construction. The differentiating factor compared to the previous Cantor set, is that in the following result, the interval of the parameters $\alpha$ where the capacity of $F$ is zero will be open in $(0,1]$, while in the previous case it was closed. In general, Cantor sets where the aforementioned interval is closed are easier to construct. The difficulty principally lies in finding a set where the interval is actually open in $(0,1]$.

\begin{proposition}\label{construction Cantor 2}
Fix $\alpha^*\in (0,1)$. There exists a generalized Cantor set $F\subset \TT$ such that $\mathrm{cap}_\alpha(F)=0$, for all $\alpha\in (0,\alpha^*)$, whereas $\mathrm{cap}_{\alpha^*}(F)>0$.
\end{proposition}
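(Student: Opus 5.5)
Following the proof of Proposition \ref{prop:cantor closed}, I will build the Cantor set on $[0,1]$ from a sequence $\{d_n\}$ and read off its capacities from Lemma \ref{lem:capacity of Cantor}. Since $K_\alpha(t)=t^{\alpha-1}$ for $\alpha<1$, what matters is the size of $2^{-n}d_n^{\alpha-1}$. The pure geometric choice $d_n=2^{-n/(1-\alpha^*)}$ makes this quantity constant at $\alpha=\alpha^*$, so the sum diverges there. I therefore add a slowly varying correction that makes $d_n$ slightly larger. Set $\beta=1/(1-\alpha^*)>1$ and
\begin{equation*}
d_n\vcentcolon= 2^{-n\beta}\,(n+1)^{\gamma}, \qquad n\in\mathbb{N}\cup\{0\},
\end{equation*}
where $\gamma>0$ is chosen so that $2^{-n}d_n^{\alpha^*-1}=(n+1)^{-\gamma(1-\alpha^*)}$ is summable. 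Since $1-\alpha^*=1/\beta$, this means $\gamma/\beta>1$, so I take $\gamma=2\beta$. Then $2^{-n}d_n^{\alpha^*-1}=(n+1)^{-2}$. Note that $d_0=1$.

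First I check the requirements of Remark \ref{rem:cantor}. The ratio is
\begin{equation*}
\frac{d_{n+1}}{d_n}=2^{-\beta}\Big(\frac{n+2}{n+1}\Big)^{\gamma},
\end{equation*}
which tends to $2^{-\beta}<1/2$. For small $n$, however, the factor $(\tfrac{n+2}{n+1})^{\gamma}$ can be as large as $2^{\gamma}$, so the condition $d_n-2d_{n+1}>0$ may fail. This is the main obstacle. The fix I would use is a shift: take $d_n=2^{-n\beta}\big(\tfrac{n+N}{N}\big)^{\gamma}$ with $N$ so large that $2^{1-\beta}\big(\tfrac{N+1}{N}\big)^{\gamma}<1$. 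This is possible because $2^{1-\beta}<1$. Since $(n+N+1)/(n+N)$ decreases in $n$, we then get $2d_{n+1}<d_n$ for all $n$. It also follows that $d_n\in(0,1)$ for $n\ge1$, that $d_n$ is decreasing, and that $d_n\to0$. Moreover the gaps satisfy
\begin{equation*}
e_n=d_n-2d_{n+1}\ge \Big(1-2^{1-\beta}\big(\tfrac{N+1}{N}\big)^{\gamma}\Big)d_n=c\,d_n
\end{equation*}
with $c>0$. So $e_n$ and $d_n$ are comparable, and the two sides of \eqref{eq:capacity Cantor} differ only by constants. Let $F=T(E)$.

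Next, for $\alpha=\alpha^*$, the right-hand side of \eqref{eq:capacity Cantor} is bounded by
\begin{equation*}
c^{\alpha^*-1}\sum_n 2^{-n}d_n^{\alpha^*-1}=c^{\alpha^*-1}N^{2}\sum_n (n+N)^{-2}<+\infty,
\end{equation*}
so $\mathrm{cap}_{\alpha^*}(F)>0$.

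Finally, for $\alpha<\alpha^*$, the terms on the left-hand side are
\begin{equation*}
\tfrac12\,2^{-n}d_n^{\alpha-1}=\tfrac12\, 2^{\,n(\beta(1-\alpha)-1)}\Big(\tfrac{n+N}{N}\Big)^{-\gamma(1-\alpha)}.
\end{equation*}
Here $\beta(1-\alpha)-1=\frac{\alpha^*-\alpha}{1-\alpha^*}>0$, so exponential growth beats the polynomial decay and the terms tend to infinity. The series diverges, and Lemma \ref{lem:capacity of Cantor} gives $\mathrm{cap}_\alpha(F)=0$, which completes the construction.
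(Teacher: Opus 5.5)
Your proof is correct and follows the same strategy as the paper: both multiply the critical geometric diameters $2^{-n/(1-\alpha^*)}$ by a subexponential correction and read off both capacities from Lemma \ref{lem:capacity of Cantor}. The paper's choice $d_n=2^{-n/\alpha_n}$ works out to $d_n=2^{-\beta(n-\sqrt{n})}$, a correction factor $2^{\beta\sqrt{n}}$ in place of your $(1+n/N)^{2\beta}$; your shift by $N$ is a mild simplification, since it gives one formula valid for all $n$ and a uniform bound $e_n\ge c\,d_n$, whereas the paper picks the first $n_0$ terms by hand and controls $e_n/d_n$ only through a limit.
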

\begin{proof}
As in the previous proposition, we first construct a generalized Cantor set of the interval $[0,1]$. Consider the sequence $\{\alpha_n\}_{n\ge2}$ with 
    \begin{equation}\label{eq:cantor 1}
        \alpha_n=\frac{\sqrt{n}(1-\alpha^*)}{\sqrt{n}-1}, \quad n\in\mathbb{N},\ n\ge2.
    \end{equation}
    By \eqref{eq:cantor 1}, it is evident that there exists some $n_0\in\mathbb{N}$ such that $\alpha_n\in(0,1)$, for all $n\ge n_0$. Furthermore, $\{\alpha_n\}$ is a decreasing sequence converging to $1-\alpha^*$, as $n\to+\infty$. Next, set $d_0=1$,
    \begin{equation}\label{eq:cantor 2}
        d_n=\left(2^{-\frac{1}{\alpha_n}}\right)^n, \quad n\in\mathbb{N}, \ n\ge n_0,
    \end{equation}
    and pick $d_1,\dots,d_{n_0-1}$, arbitrarily in $(0,1)$, so that the inducing sequence $\{d_n\}_{n\in\mathbb{N}\cup\{0\}}$ is strictly decreasing to $0$. It is easy to check from \eqref{eq:cantor 2} that for $n\ge n_0$, the sequence $\{d_n\}_{n\ge n_0}$ is indeed strictly decreasing, is a subset of $(0,1)$, and $\lim_{n\to+\infty}d_n=0$. Therefore, adding the remaining $n_0$ terms makes the whole construction possible. Now, as described in the beginning of this section, we construct a generalized Cantor set $E=\cap_{n=0}^{+\infty}\cup_{j=1}^{2^n}E_n^j$, where in the $n$-th step we remove middle open segments of length $e_{n-1}$ and closed segments of equal length $d_n$ remain. For the Cantor set $E$ to be well-defined, we need $e_n=d_n-2d_{n+1}>0$, for all $n\in\mathbb{N}\cup\{0\}$; see Remark \ref{rem:cantor}. Clearly, the terms $d_0,\dots,d_{n_0-1}$, can be picked so that this condition holds. For $n\ge n_0$, direct algebraic considerations on formulas \eqref{eq:cantor 1} and \eqref{eq:cantor 2} show that the existence of $a_n<1$ is equivalent to $\alpha^*>\frac{1}{\sqrt{n}}$. On the other hand, $d_n-2d_{n+1}>0$ is equivalent to $\sqrt{n+1}-\sqrt{n}<\alpha^*.$ But since $\alpha^*>\frac{1}{\sqrt{n}}$, for all $n\ge n_0$, the previous inequality holds too. So, the generalized Cantor set $E\subset[0,1]$ is well-defined and in particular, for $n\ge n_0$,
    \begin{equation}\label{eq:cantor 3}
        e_n=\left(2^{-\frac{1}{\alpha_{n}}}\right)^{n}-2\left(2^{-\frac{1}{\alpha_{n+1}}}\right)^{n+1}=\left(2^{-\frac{1}{\alpha_{n}}}\right)^{n}\left[1-2^{1+n\left(\frac{1}{\alpha_{n}}-\frac{1}{\alpha_{n+1}}\right)-\frac{1}{\alpha_{n+1}}}\right].
    \end{equation}
    As usual, with the aid of the transformation $T\vcentcolon[0,1]\to\TT$, where $T(\theta)=e^{i2\pi\theta}$, the image $F\vcentcolon=T(E)$ yields a generalized Cantor set on the unit circle. We will now measure its capacity $\mathrm{cap}_\alpha(F)$, for the indices $\alpha\in(0,\alpha^*]$. First, suppose that $\alpha\in(0,\alpha^*)$. By Lemma \ref{lem:capacity of Cantor}, we have that
    \begin{equation}\label{eq:cantor 4}
        \frac{1}{\mathrm{cap}_\alpha(F)}\ge \sum_{n=0}^{+\infty}\frac{1}{d_n^{1-\alpha}2^{n+1}}\ge\frac{1}{2}\sum_{n=n_0}^{+\infty}\frac{1}{\left(2^{\frac{\alpha-1}{\alpha_n}}\right)^n2^n}=\frac{1}{2}\sum_{n=n_0}^{+\infty}\left(\frac{1}{2^{1+\frac{\alpha-1}{\alpha_n}}}\right)^n.
    \end{equation}
    However, the sequence $\{\alpha_n\}$ converges decreasingly to $1-\alpha^*$. Therefore, and since $\alpha\in(0,\alpha^*)$, there exists $n_1\ge n_0$ so that $\alpha_n<1-\alpha$, for all $n\ge n_1$. This leads to $2^{1+\frac{\alpha-1}{\alpha_n}}<1$, for all $n\ge n_1$. Returning to \eqref{eq:cantor 4}, the latter sum diverges and consequently $\mathrm{cap}_\alpha(F)=0$. The choice of $\alpha\in(0,\alpha^*)$ was arbitrary and hence the desired result holds for all such $\alpha$. We are left with estimating $\mathrm{cap}_{\alpha^*}(F)$. This time, in view of Lemma \ref{lem:capacity of Cantor}
    \begin{equation}\label{eq:cantor 5}
    \frac{1}{\mathrm{cap}_{\alpha^*}(F)} \le \sum_{n=0}^{+\infty}\frac{1}{e_n^{1-\alpha^*}2^n}=c_{n_0}+\sum_{n=n_0}^{+\infty}\frac{1}{2^{\sqrt{n}}\left[1-2^{1+n\left(\frac{1}{\alpha_n}-\frac{1}{\alpha_{n+1}}\right)-\frac{1}{\alpha_{n+1}}}\right]^{1-\alpha^*}},
    \end{equation}
    where we have used \eqref{eq:cantor 1}, \eqref{eq:cantor 3} and executed certain quick computations. We need to examine the part in brackets of the latter denominator in order to check the convergence of the corresponding sum. Using \eqref{eq:cantor 1}, taking limits as $n\to+\infty$, and keeping in mind that $\lim_{n\to+\infty}\alpha_n=1-\alpha^*$, we may see that
    \begin{equation*}
        \lim_{n\to+\infty}\left[1-2^{1+n\left(\frac{1}{\alpha_n}-\frac{1}{\alpha_{n+1}}\right)-\frac{1}{\alpha_{n+1}}}\right]^{1-\alpha^*}=\left[1-2^{-\frac{\alpha^*}{1-\alpha^*}}\right]^{1-\alpha^*}.
    \end{equation*}
    Applying this limit on \eqref{eq:cantor 5}, we may find an absolute constant $c>0$ such that
    \begin{equation*}
        \frac{1}{\mathrm{cap}_{\alpha^*}(F)} \le c_{n_0}+c\sum_{n=n_0}^{+\infty}2^{-\sqrt{n}}<+\infty.
    \end{equation*}
    As a result, it is necessary that $\mathrm{cap}_{\alpha^*}(F)>0$ and thus $F$ is the desired set.
\end{proof}

Finally, we deal with the instance when $\alpha^*=1$, in which case the corresponding Riesz capacity turns into the more common logarithmic capacity. The next result is the counterpart of Proposition \ref{construction Cantor 2}. A counterpart of Proposition \ref{prop:cantor closed} for $\alpha^*=1$ does not require a proof, since there obviously exist Cantor sets with positive Riesz-$\alpha$ capacity, for all $\alpha\in(0,1]$.

\begin{proposition}\label{construction cantor a 1}
There exists a generalized Cantor set $F\subset \TT$ such that $\mathrm{cap}_\alpha(F)=0$, for all $\alpha\in (0,1)$, whereas $\mathrm{cap}_1(F)>0$.
\end{proposition}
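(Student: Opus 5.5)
We will again build a generalized Cantor set $E\subset[0,1]$ from a sequence $\{d_n\}$ and set $F=T(E)$, using Lemma \ref{lem:capacity of Cantor} with the logarithmic kernel $K_1(t)=\log^+(2/t)$ for $\alpha=1$ and $K_\alpha(t)=t^{\alpha-1}$ for $\alpha<1$.

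\textbf{Choice of the sequence.} To make $\sum_n K_1(e_n)/2^n$ converge we need $\log(1/e_n)$ to grow somewhat slower than $2^n$. To make $\sum_n d_n^{\alpha-1}2^{-n}$ diverge for every $\alpha<1$, it suffices that $d_n^{\alpha-1}2^{-n}\ge 1$ for all large $n$, i.e.\ $\log(1/d_n)\ge n\log2/(1-\alpha)$ eventually. Any growth that is superlinear in $n$ but much slower than $2^n$ will do. We take
\begin{equation*}
d_n=2^{-n^2},\quad n\ge0,
\end{equation*}
so $d_0=1$ and $d_n$ decreases strictly to $0$.

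\textbf{Admissibility.} We compute $e_n=d_n-2d_{n+1}=2^{-n^2}\bigl(1-2^{1-(2n+1)}\bigr)=2^{-n^2}(1-2^{-2n})$. This is positive for $n\ge1$. For $n=0$ it vanishes, so we adjust the first step: choose $d_1\in(0,1/2)$ with $d_1>2d_2$. For instance, set $d_n=2^{-n^2}$ only for $n\ge1$ and note that $d_0=1>2d_1=1/2$. In general one defines $d_n=2^{-n^2-1}$ for $n\ge1$, and then $e_0=1/2>0$. Then $e_n\ge \tfrac12 2^{-n^2-1}$ for all $n$, so the set is well defined by Remark \ref{rem:cantor}.

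\textbf{Capacities.}
\begin{itemize}
\item For $\alpha=1$: $K_1(e_n)=\log(2/e_n)\le C(n^2+1)$, so $\sum_n K_1(e_n)2^{-n}\le C\sum_n (n^2+1)2^{-n}<+\infty$. The right-hand inequality of \eqref{eq:capacity Cantor} then gives $\mathrm{cap}_1(F)>0$.
\item For $\alpha\in(0,1)$: $K_\alpha(d_n)2^{-(n+1)}=\tfrac12\, 2^{(1-\alpha)(n^2+1)-n}$. This tends to $+\infty$, since $(1-\alpha)n^2$ dominates $n$. The series diverges, and the left-hand inequality of \eqref{eq:capacity Cantor} yields $\mathrm{cap}_\alpha(F)=0$.
\end{itemize}

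\textbf{Main obstacle.} The only delicate point is reconciling the two growth requirements: a superlinear growth of $\log(1/d_n)$ for divergence at every $\alpha<1$, and subexponential growth of $\log(1/e_n)$ for convergence at $\alpha=1$. This also requires $e_n$ to be comparable to $d_n$, which holds because $d_{n+1}/d_n\to0$. Once the quadratic exponent is chosen, everything is a routine verification. We should also check that the constants in Lemma \ref{lem:relation of Cantor sets} do not affect convergence, which is already built into Lemma \ref{lem:capacity of Cantor}.
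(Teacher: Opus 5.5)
Your proof is correct and follows the same route as the paper: you choose an explicit $\{d_n\}$ with $\log(1/d_n)$ superlinear in $n$ but $\sum_n \log(2/e_n)2^{-n}<+\infty$, and you apply the left and right sides of Lemma \ref{lem:capacity of Cantor}; the paper takes $d_n=(n+1)^{-(n+1)}$, while you take $d_0=1$, $d_n=2^{-n^2-1}$ for $n\ge1$. One blemish: in the admissibility paragraph, the claim $d_0=1>2d_1=1/2$ for $d_1=2^{-1}$ is false (in fact $2d_1=1$, so $e_0=0$), but your final choice $d_n=2^{-n^2-1}$ for $n\ge1$ fixes this, and all your subsequent estimates are valid for it.
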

\begin{proof}
Working as in the previous theorem, we will first construct a generalized Cantor set $E\subset [0,1]$. Consider the sequences
\begin{equation}\label{eq:logarithmic 1}
    d_n=\frac{1}{(n+1)^{n+1}}, \quad e_n=d_n-2d_{n+1}, \quad n\in\mathbb{N}\cup\{0\}.
\end{equation}
These two sequences define a unique generalized Cantor set on the unit circle following the procedure described before, since easily $e_n>0$, for all $n\in\mathbb{N}\cup\{0\}$. Consider the transformation $T\vcentcolon[0,1]\to\TT$ with $T(\theta)=e^{i2\pi\theta}$ and set $F=T(E)$, which is a generalized Cantor set on $\TT$. Our goal is to utilize Lemma \ref{lem:capacity of Cantor} once again. Indeed, for any $\alpha\in(0,1)$, by \eqref{eq:kernel} and \eqref{eq:logarithmic 1} we have
\begin{equation*}
    \sum_{n=0}^{+\infty}\frac{K_\alpha(d_n)}{2^{n+1}}=\sum_{n=0}^{+\infty}\frac{[(n+1)^{1-\alpha}]^{n+1}}{2^{n+1}}=\sum_{n=0}^{+\infty}\left(\frac{(n+1)^{1-\alpha}}{2}\right)^{n+1}=+\infty.
\end{equation*}
The left-hand side in Lemma \ref{lem:capacity of Cantor} implies that $\mathrm{cap}_\alpha(F)=0$, for any choice of $\alpha\in(0,1)$. We are left with evaluating $\mathrm{cap}_1(F)$. Using \eqref{eq:kernel} and \eqref{eq:logarithmic 1}, we have
\begin{align*}
    \sum_{n=0}^{+\infty}\frac{K_1(e_n)}{2^n}&=\sum_{n=0}^{+\infty}\frac{\log\frac{2(n+1)^{n+1}(n+2)^{n+2}}{(n+2)^{n+2}-2(n+1)^{n+1}}}{2^n} \\
    &\le \sum_{n=0}^{+\infty}\frac{\log2+(n+1)\log(n+1)+(n+2)\log(n+2)}{2^n}\\
    &\le \sum_{n=0}^{+\infty}\frac{3(n+2)\log(n+2)}{2^n}\\
    &<+\infty,
\end{align*}
where the first inequality follows from the relation $(n+2)^{n+2}-2(n+1)^{n+1}\ge 1$, for all $n\in\mathbb{N}\cup\{0\}$, the second is trivial, while the finiteness of the latter sum can be inferred through the root test. Via the right-hand side in Lemma \ref{lem:capacity of Cantor}, we find $\mathrm{cap}_1(F)>0$. Therefore, the set $F$ is the desired set and the proof is complete.
\end{proof}

\begin{remark}\label{remark location of cantor}
In both of the preceding propositions, we first worked on the interval $[0,1]$ and afterwards we transferred the estimations to the unit circle by a proper transformation.  Similarly, it is possible to work on any interval $[a,b]$ in order to appropriately modify the size of $E$ and its location on the unit circle. In other words, for any fixed $\alpha^*\in (0,1],$ and for any open arc $I\subset \TT,$ there exists a Cantor-type set $E\subset I$ such that $\mathrm{cap}_a(E)=0,$ for all $\alpha\in (0,\alpha^*),$ and $\mathrm{cap}_{\alpha^*}(E)>0.$
\end{remark}

\section{The first construction}
To start this section, we need one final definition. Let $E\subset \TT$ be a closed set. Recall that $E$ is characterized as a \textit{Carleson} set if
\begin{equation*}
    \int_{\TT}\log\Big(\frac{1}{\mathrm{dist}(\zeta,E)}\Big)|d\zeta|<+\infty.
\end{equation*}
It may be directly checked that every generalized Cantor set of the unit circle is a Carleson set. Therefore, the sets constructed in Propositions \ref{prop:cantor closed}, \ref{construction Cantor 2} and \ref{construction cantor a 1} are Carleson sets. For more information on Carleson sets we refer to \cite[Section 4.4]{Primer}.

Let $k\in \mathbb{N}$ and consider a Carleson set $E\subset \TT$. We may write $\TT\setminus E=\cup_{j=1}^{+\infty} I_j$, where $I_j$ is an arc on the unit circle with endpoints $a_j,b_j$. Let $\phi_E\vcentcolon\TT\to [0,+\infty)$ be the function defined by
\begin{equation}\label{eq:phi outer}
\phi_E(\zeta)=
\begin{cases}
    |(\zeta-a_j)(\zeta-b_j)|, \quad\quad\hspace{0.1cm}\text{for }\zeta\in I_j\\
    \quad 0, \hspace{3.33cm}\text{otherwise }.
\end{cases}
\end{equation}
Next, pick $N>2k$ and consider the function $f_{E,N}:\DD\rightarrow \CC$, with
\begin{equation}\label{eq: f outer}
f_{E,N}(z):=\exp\Big(\frac{N}{2\pi}\int_{\TT}\frac{\zeta+z}{\zeta-z}\log\phi_E(\zeta)|d\zeta|\Big).
\end{equation}
Following the proof of \cite[Theorem 4.4.3]{Primer}, the function $f_{E,N}$ is outer, lies in $A^k(\DD)$, and satisfies $\mathcal{Z}(f_{E,N}^*)=E$; see also \cite{Carleson}. Note that $A^k(\DD)\subset \mathcal{D}\subset \cD_\alpha$, for all $k\geq 1$ and all $\alpha\leq 1$. Hence, $f_{E,N}$ belongs to all the Dirichlet-type spaces we consider in the present work. 

The following theorem provides our first critical counterexample described in the main \hyperref[Problem]{Problem}. We shall apply Theorem \ref{thm: efkr 2} in order to inspect the cyclicity of the aforementioned function $f_{E,N}$ in the spaces $\cD_\alpha$.

\begin{theorem}\label{first}
Fix $\alpha^*\in (0,1]$. There exists a function $f\in \cD_{\alpha^*}$ which is cyclic in $\cD_\alpha$ for all $\alpha<\alpha^*$, but is not cyclic in $\cD_{\alpha^*}$. In particular, for any fixed $k\in\mathbb{N}$, the function may be chosen to belong to $A^k(\DD)$.
\end{theorem}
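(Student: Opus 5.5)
The plan is to take $f=f_{E,N}$ from \eqref{eq: f outer}, where $E\subset\TT$ is a generalized Cantor set with $\mathrm{cap}_\alpha(E)=0$ for all $\alpha\in(0,\alpha^*)$ but $\mathrm{cap}_{\alpha^*}(E)>0$, and $N>2k$ is fixed. For $\alpha^*\in(0,1)$ such an $E$ is supplied by Proposition \ref{construction Cantor 2}, and for $\alpha^*=1$ by Proposition \ref{construction cantor a 1}. As recorded before the statement, $E$ is a Carleson set. By (the proof of) \cite[Theorem 4.4.3]{Primer}, $f$ is then outer, belongs to $A^k(\DD)$, and satisfies $\mathcal{Z}(f^*)=E$. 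Since $k\ge 1$, we get $A^k(\DD)\subset\cD\subset\cD_\alpha$ for every $\alpha\le 1$, so in particular $f\in\cD_{\alpha^*}$. This settles membership and the ``in particular'' clause at once.

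For non-cyclicity in $\cD_{\alpha^*}$, argue by contradiction. If $f$ were cyclic in $\cD_{\alpha^*}$, Theorem \ref{thm: efkr 1} would give $\mathrm{cap}_{\alpha^*}(\mathcal{Z}(f^*))=0$. But $\mathcal{Z}(f^*)=E$ and $\mathrm{cap}_{\alpha^*}(E)>0$. Since $f$ is continuous on $\overline{\DD}$, the radial limit is simply the boundary restriction, so there is no ambiguity about exceptional sets.

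For cyclicity when $\alpha<\alpha^*$, split into two ranges.

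\emph{Case $\alpha\in(0,\alpha^*)$.} Apply Theorem \ref{thm: efkr 2} and check its three hypotheses:
\begin{enumerate}
\item[(i)] $f$ is outer by construction;
\item[(ii)] $|f|$ extends continuously to $\overline{\DD}$ because $f\in A^k(\DD)\subset A(\DD)$;
\item[(iii)] $\mathcal{Z}(f^*)=E$ is itself a generalized Cantor set with $\mathrm{cap}_\alpha(E)=0$.
\end{enumerate}
Hence $f$ is cyclic in $\cD_\alpha$.

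\emph{Case $\alpha\le 0$.} The statement says ``all $\alpha<\alpha^*$'', so these values should also be covered. Here I would use the monotonicity noted before the Problem: cyclicity in $\cD_{\alpha_2}$ implies cyclicity in $\cD_{\alpha_1}$ for $\alpha_1<\alpha_2$. Applying this with some fixed $\alpha_2\in(0,\alpha^*)$ gives cyclicity for every $\alpha\le 0$. For $\alpha=0$ this is also immediate from Beurling's theorem, since $f$ is outer.

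The main point requiring care is the identification $\mathcal{Z}(f^*)=E$ together with the regularity $f\in A^k(\DD)$. This rests on the construction in \cite[Theorem 4.4.3]{Primer}, which needs $E$ to be Carleson and $N>2k$. The Carleson property of a generalized Cantor set was asserted rather than proved. If that needs justification, I would estimate $\int_{\TT}\log(1/\mathrm{dist}(\zeta,E))|d\zeta|$ by summing over the $2^n$ removed arcs of length comparable to $\epsilon_n$ (Lemma \ref{lem:relation of Cantor sets}). Each arc contributes $O(\epsilon_n\log(1/\epsilon_n))$, and $\sum_n 2^n\epsilon_n\log(1/\epsilon_n)$ is finite because $2^n\epsilon_n\le 2^n d_n$ decays fast enough in our explicit constructions. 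Beyond this, the proof is a direct assembly of Theorems \ref{thm: efkr 1} and \ref{thm: efkr 2}.
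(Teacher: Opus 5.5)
Your proposal is correct and is essentially the paper's own proof: you take the Cantor set of Proposition~\ref{construction Cantor 2} (or Proposition~\ref{construction cantor a 1} when $\alpha^*=1$), form $f_{E,N}$ with $N>2k$, then apply Theorem~\ref{thm: efkr 2} for $\alpha\in(0,\alpha^*)$ and Theorem~\ref{thm: efkr 1} at $\alpha^*$. Your two additions are sound and worthwhile. Covering $\alpha\le 0$ by monotonicity of cyclicity fills a case the paper leaves implicit. Checking the Carleson property from the rapid decay of $2^n d_n$ for these specific sets is genuinely needed, because the paper's remark that every generalized Cantor set is Carleson fails in general: for instance, $d_n=2^{-n}/(n+1)$ gives $\sum_n 2^n e_n\log(1/e_n)=+\infty$.
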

\begin{proof}
Let $E_{\alpha^*}\subset \TT$ be either the set constructed in Proposition \ref{construction Cantor 2} ($\alpha^*\in (0,1)$), or the set constructed in Proposition \ref{construction cantor a 1} ($\alpha^*=1$). Both sets are generalized Cantor sets and hence Carleson sets. So we may use \eqref{eq:phi outer} to define the corresponding functions $\phi_{E_{\alpha^*}}$. Fix $N>2k$. By \eqref{eq: f outer}, we have a function $f_{E_{\alpha^*},N}\in A^k(\DD)\subset\cD_{\alpha^*}$ and $\mathcal{Z}(f_{E_{\alpha^*},N}^*)=E_{\alpha^*}$. Therefore, by Propositions \ref{construction Cantor 2} and \ref{construction cantor a 1}, $\mathrm{cap}_\alpha(\mathcal{Z}(f_{E_{\alpha^*},N}^*))=0$, for all $\alpha\in (0,\alpha^*)$, whereas $\mathrm{cap}_{\alpha^*}(\mathcal{Z}(f_{E_{\alpha^*},N}^*))>0$. Hence, we may apply Theorem \ref{thm: efkr 2} to deduce cyclicity in $\cD_\alpha$, for all $\alpha\in(0,\alpha^*)$. Furthermore, by Theorem \ref{thm: efkr 1}, the positivity of $\mathrm{cap}_{\alpha^*}(\mathcal{Z}(f_{E_{\alpha^*},N}^*))>0$ yields non-cyclicity in $\cD_{\alpha^*}$, something that completes the proof.
\end{proof}

\section{The second construction}
We move on to a construction which is fundamentally different to the previous one. This second approach is inspired by and based on techniques and arguments introduced in \cite{Brown-Cohn, Carleson2} and \cite[Section 3.4]{Primer}. In these works, the authors deal with the classical Dirichlet space $\cD=\cD_1$. We are going to appropriately modify the proofs in order to produce results about the Dirichlet-type spaces $\cD_\alpha$, $\alpha\in(0,1)$, something that is partly motivated by \cite[Remark 2]{Brown-Cohn}. In our setting, the key difference is that we require uniform estimates with respect to the parameters $\alpha\in (0,1)$. Therefore, we first need to develop the necessary corresponding tools.

We start with a brief mention of another tool of potential theory with great capabilities, the harmonic measure. Even though the harmonic measure may be defined for any domain whose boundary has positive logarithmic capacity, we will only need it in the setting of the unit disc. More specifically, let $E\subseteq\TT$ be Borel. Then, the \textit{harmonic measure of} $E$ \textit{with respect to} $\DD$ is the solution of the generalized Dirichlet problem with boundary values $1$ on $E$ and $0$ on $\TT\setminus E$. By definition, the harmonic measure is a harmonic function, whose values we denote by $\omega(z,E,\DD)$, $z\in\DD$. It can be proved that
\begin{equation}\label{eq:harmonic measure}
    \omega(z,E,\DD)=\frac{1}{2\pi}\int_{E}\frac{1-|z|^2}{|z-\zeta|^2}|d\zeta|,
\end{equation}
where the integrand is the Poisson kernel. An important feature of harmonic measure that we are going to need is its following sense of continuity:
\begin{theorem}[{cf. \cite[Theorem 4.3.4]{Ransford}}]\label{thm:harmonic measure}
    Let $E\subseteq\TT$ be Borel and suppose that $\zeta$ is in the relative interior of $E$ in $\TT$. Then $\lim_{z\to\zeta}\omega(z,E,\DD)=1$.
\end{theorem}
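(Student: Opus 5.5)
The statement is cited from Ransford, so I would give the standard direct argument based on the Poisson integral representation \eqref{eq:harmonic measure}. Fix $\zeta$ in the relative interior of $E$. Then there is an open arc $J\subset E$ with $\zeta\in J$, and we may take $J=\{e^{it}: |t-t_0|<\delta\}$ for some $\delta>0$, where $\zeta=e^{it_0}$. Since the Poisson kernel is nonnegative and integrates to $1$ over $\TT$ (with the normalization $\frac{1}{2\pi}|d\zeta|$), monotonicity of the integral gives
\begin{equation*}
1\ge \omega(z,E,\DD)\ge \omega(z,J,\DD)=1-\omega(z,\TT\setminus J,\DD), \qquad z\in\DD .
\end{equation*}
It therefore suffices to show that $\omega(z,\TT\setminus J,\DD)\to 0$ as $z\to\zeta$.

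For this estimate, restrict to $z\in\DD$ with $|z-\zeta|<\delta'$, where $\delta'$ is chosen so that $\mathrm{dist}(\zeta,\TT\setminus J)\ge 2\delta'$; for instance $2\delta'=2\sin(\delta/2)$ works, by Lemma \ref{lem:relation of Cantor sets}(a). For every $\eta\in\TT\setminus J$ we then have $|z-\eta|\ge |\zeta-\eta|-|z-\zeta|\ge \delta'$. Bounding the integrand by $(1-|z|^2)/\delta'^2$ yields
\begin{equation*}
\omega(z,\TT\setminus J,\DD)\le \frac{1}{2\pi}\int_{\TT\setminus J}\frac{1-|z|^2}{\delta'^2}|d\eta|\le \frac{1-|z|^2}{\delta'^2}.
\end{equation*}
As $z\to\zeta\in\TT$ we have $|z|\to 1$, so the right-hand side tends to $0$. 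Combined with the sandwich above, this gives $\lim_{z\to\zeta}\omega(z,E,\DD)=1$.

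There is no real obstacle here. The one point to check is that the formula \eqref{eq:harmonic measure} actually represents the harmonic measure, in the sense that the Poisson integral of the indicator $\chi_E$ solves the generalized Dirichlet problem. I would take this as given, since the paper states it. One also uses the standard identity $\frac{1}{2\pi}\int_\TT \frac{1-|z|^2}{|z-\eta|^2}|d\eta|=1$, which is the statement that the constant function $1$ is harmonic. The only genuine input is that $\zeta$ lies at positive distance from $\TT\setminus J$, and that is exactly what the relative-interior hypothesis provides.
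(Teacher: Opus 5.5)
Your proof is correct. The only tidying needed is to take $\delta<\pi$, so that $\TT\setminus J\neq\emptyset$ and $\mathrm{dist}(\zeta,\TT\setminus J)$ really equals $2\sin(\delta/2)$. In fact you can drop $J$ altogether and work directly with $\rho=\mathrm{dist}(\zeta,\TT\setminus E)$, which is positive because $\zeta$ is a relative interior point of $E$.

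The paper gives no proof of its own; it cites \cite[Theorem 4.3.4]{Ransford}. There the statement is a special case of the boundary behaviour of harmonic measure at \emph{regular} boundary points of general domains with non-polar boundary, which ultimately rests on the Perron solution of the Dirichlet problem. That route buys generality (arbitrary domains, and arbitrary bounded Borel boundary data continuous at the point), but it is purely qualitative.

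Your route uses only the explicit Poisson representation \eqref{eq:harmonic measure}. This is exactly the form in which $\omega(z,E_n,\DD)$ arises in the proof of Theorem \ref{construction of function1}, so your argument makes that proof self-contained. It is also quantitative: $1-\omega(z,E,\DD)=\omega(z,\TT\setminus E,\DD)\le 4(1-|z|^2)/\rho^2$ whenever $|z-\zeta|<\rho/2$.

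This bound depends on $\zeta$ only through $\rho$, so it is uniform on compact subsets of the relative interior of $E$. Put $\rho_n=\mathrm{dist}(E,\TT\setminus E_n)$, which is positive since $E$ is compact and contained in the interior of $E_n$. Then $\omega(r\zeta,E_n,\DD)\ge 1/2$ for all $\zeta\in E$ as soon as $1-r^2\le\rho_n^2/8$; this condition also forces $1-r<\rho_n/2$. That is precisely the uniform choice of the radii $r_n$ which the paper makes ``due to the compactness of $E$''.
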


Before delving into the main results, we prove an auxiliary lemma that will be of use in the sequel. Let us note that such an inequality is also of fundamental importance with regard to the potential theory in the finite-dimensional ball setting.

\begin{lemma}\label{Real estimation}
Fix $\alpha_1\in (0,1)$. There exists a constant $C\ge1$ depending solely on the choice of $\alpha_1$ such that
\begin{equation*}
\frac{1}{|1-z\overline{\eta}|^{1-\alpha}}\leq C \text{ }\mathrm{Re}\Big(\frac{1}{(1-z\overline{\eta})^{1-\alpha}}\Big),  
\end{equation*} 
for all $\alpha\in [\alpha_1,1]$, $z\in\DD$ and $\eta\in\TT$.
\end{lemma}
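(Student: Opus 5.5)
Write $w=1-z\overline{\eta}$ and take the principal branch of the power. Since $z\in\DD$ and $\eta\in\TT$, we have $|z\overline{\eta}|<1$. Hence $\mathrm{Re}\,w>0$ and $w\neq 0$, so the argument $\theta=\arg w$ lies in $(-\pi/2,\pi/2)$. Then
\begin{equation*}
\frac{1}{w^{1-\alpha}}=|w|^{-(1-\alpha)}e^{-i(1-\alpha)\theta},
\end{equation*}
which gives
\begin{equation*}
\mathrm{Re}\Big(\frac{1}{w^{1-\alpha}}\Big)=\frac{\cos\big((1-\alpha)\theta\big)}{|w|^{1-\alpha}}.
\end{equation*}
The inequality of the statement is therefore equivalent to the purely real bound
\begin{equation*}
\cos\big((1-\alpha)\theta\big)\ge \frac1C \quad\text{for all } |\theta|<\pi/2,\ \alpha\in[\alpha_1,1].
\end{equation*}
This reduction is the whole idea. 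The only point needing care is the choice of branch: the principal branch is the one that makes $(1-z\overline{\eta})^{1-\alpha}$ holomorphic in $z\in\DD$, and $\arg w$ stays within $(-\pi/2,\pi/2)$ because $w$ lies in the open right half-plane.

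Next, I bound the cosine uniformly. For $\alpha\in[\alpha_1,1]$ we have $0\le 1-\alpha\le 1-\alpha_1<1$. Therefore
\begin{equation*}
|(1-\alpha)\theta|<(1-\alpha_1)\frac{\pi}{2}<\frac{\pi}{2}.
\end{equation*}
Since cosine is even and decreasing on $[0,\pi/2]$, this yields $\cos((1-\alpha)\theta)\ge\cos\big((1-\alpha_1)\pi/2\big)=\sin(\alpha_1\pi/2)>0$. So we may take
\begin{equation*}
C=\frac{1}{\sin(\alpha_1\pi/2)}\ge 1.
\end{equation*}
This constant depends only on $\alpha_1$. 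The endpoint $\alpha=1$ is trivial, since both sides equal $1$.

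No step is a genuine obstacle; the only delicate point is the branch and the range of the argument. I would also remark that the constant must blow up as $\alpha_1\to0$, which is why $\alpha_1>0$ is fixed. At $\alpha=0$ the exponent is $1$, and as $z$ approaches $\TT$ tangentially $\arg w$ tends to $\pm\pi/2$, so $\mathrm{Re}(1/w)$ can be much smaller than $1/|w|$.
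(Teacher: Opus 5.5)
Your proof is correct and follows essentially the same route as the paper: both write $\mathrm{Re}\big((1-z\overline{\eta})^{-(1-\alpha)}\big)=\cos\big((1-\alpha)\arg(1-z\overline{\eta})\big)/|1-z\overline{\eta}|^{1-\alpha}$ with the argument in $(-\pi/2,\pi/2)$. Both then bound the cosine below uniformly, giving the same constant $C=(\cos((1-\alpha_1)\pi/2))^{-1}=1/\sin(\alpha_1\pi/2)$. The paper merely writes the argument out explicitly via $\arctan$.
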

\begin{proof}
Set $z=re^{i\varphi}\in\DD$ and $\eta=e^{i\theta}\in \TT$, where $\varphi,\theta\in [0,2\pi)$ and $r\in[0,1)$. Note that
\begin{equation*}
1-z\overline{\eta}=1-re^{i(\varphi-\theta)}=1-r\cos(\varphi-\theta)-ir\sin(\varphi-\theta),
\end{equation*}
and thus
\begin{equation*}
  \mathrm{Re}(1-z\overline{\eta})=1-r\cos(\varphi-\theta)>0.  
\end{equation*}
Thus, we may write
\begin{equation}\label{arg}
\arg{(1-z\overline{\eta}})=\arctan\frac{\mathrm{Im}{(1-z\overline{\eta}})}{\mathrm{Re}{(1-z\overline{\eta}})}=\arctan\frac{-r\sin(\varphi-\theta)}{1-r\cos(\varphi-\theta)}\in\left(-\frac{\pi}{2},\frac{\pi}{2}\right).
\end{equation}
In case $\alpha=1$, the statement of the lemma is trivially true for any $C\ge1$. Next, pick $\alpha\in [\alpha_1,1)$. One may see that
\begin{equation}\label{Re}
\mathrm{Re}\Big(\frac{1}{(1-z\overline{\eta})^{1-\alpha}}\Big)=\frac{\mathrm{Re}(e^{-i(1-\alpha)\arg(1-z\overline{\eta})})}{|1-z\overline{\eta}|^{1-\alpha}}=\frac{\cos((1-\alpha)\arg(1-z\overline{\eta}))}{|1-z\overline{\eta}|^{1-\alpha}}.
\end{equation}
We wish to prove the existence of an absolute constant $C\ge1$ such that 
\begin{equation*}
   1\leq C \text{ }|1-z\overline{\eta}|^{1-\alpha}\mathrm{Re}\Big(\frac{1}{(1-z\overline{\eta})^{1-\alpha}}\Big), 
\end{equation*}
which, via \eqref{arg} and \eqref{Re} is equivalent to
\begin{equation*}
1\leq C\cos{\Big((1-\alpha) \arctan\frac{-r\sin(\varphi-\theta)}{1-r\cos(\varphi-\theta)}\Big)},
\end{equation*}
and $C\ge1$ does not depend on $r\in [0,1)$, $\varphi,\theta\in [0,2\pi)$, and $\alpha\in [\alpha_1,1)$. In particular, such a uniform constant exists if
\begin{equation}
\inf_{\alpha,r, \varphi,\theta}{\cos{\Big((1-\alpha) \arctan\frac{-r\sin(\varphi-\theta)}{1-r\cos(\varphi-\theta)}\Big)}}>0,
\end{equation}
where $ \alpha,r, \varphi,\theta$ lie in the sets mentioned above. Indeed, since the cosine and arctangent functions are even and odd, respectively, we have
\begin{equation*}
\cos{\Big((1-\alpha) \arctan\frac{-r\sin(\varphi-\theta)}{1-r\cos(\varphi-\theta)}\Big)}=\cos{\Big((1-\alpha) \arctan\frac{r|\sin(\varphi-\theta)|}{1-r\cos(\varphi-\theta)}\Big)}.
\end{equation*}
Furthermore,
\begin{equation*}
    (1-\alpha)\arctan\frac{r|\sin(\varphi-\theta)|}{1-r\cos(\varphi-\theta)}\in [0,(1-\alpha)\pi/2),
\end{equation*}
and hence,  by the monotonicity of $\cos x$ and $\arctan x$ on $[0,\pi/2)$, we obtain
\begin{equation*}
\cos{\Big((1-\alpha) \arctan\frac{r|\sin(\varphi-\theta)|}{1-r\cos(\varphi-\theta)}\Big)}\geq \cos\left((1-\alpha)\frac{\pi}{2}\right)\geq \cos\left((1-\alpha_1)\frac{\pi}{2}\right)>0,
\end{equation*}
for all $r\in [0,1)$, $\varphi,\theta\in [0,2\pi)$, and $\alpha\in [\alpha_1,1)$. This proves the assertion with $C=(\cos((1-\alpha_1)\pi/2))^{-1}>1$.
\end{proof}

Before proceeding, we recall certain standard facts relative to the Gamma function. The binomial series formula
\begin{equation}\label{eq:binomial}
    \frac{1}{(1-z)^\beta}=\sum_{k=0}^{+\infty}\frac{\Gamma(k+\beta)}{\Gamma(\beta)k!}z^k,
\end{equation}
holds for all $z\in\DD$ and all positive real numbers $\beta$, where $\Gamma$ denotes the Gamma function. Moreover, the Gamma function satisfies $\Gamma(k)=(k-1)!$, for all $k\in\mathbb{N}$. In particular, Gautschi's inequality states that
\begin{equation}\label{eq:gautschi}
    k^{1-\beta}<\frac{\Gamma(k+1)}{\Gamma(k+\beta)}<(k+1)^{1-\beta},
\end{equation}
for all $k\in\mathbb{N}\cup\{0\}$ and all $\beta\in (0,1)$; see \cite[p. 79, eq. (7)]{Gautschi}.

We are now ready to move on to the main part of this section. We will frequently work with the kernel, the potential, and the energy linked with Riesz $\alpha$-capacity; see Section \ref{sec:preliminaries}. For this reason, we will prove a handy lemma concerning the Riesz $\alpha$-energy of a Borel probability measure supported on a compact subset of $\TT$. 

Let $\mu$ be such a measure. It is well-known to experts (see, for instance, \cite[p. 566]{EFKR}) that given $\alpha\in(0,1)$, $I_\alpha[\mu]$ is comparable to $\sum_{k=0}^{+\infty}\frac{|\hat{\mu}(k)|^2}{(k+1)^\alpha}$, where
\begin{equation}\label{eq:fourier}
    \hat{\mu}(k)\vcentcolon=\int_{\TT}{\bar{\zeta}}^kd\mu(\zeta), \quad k\in\mathbb{Z},
\end{equation}
denotes $k$-th Fourier coefficient of the measure $\mu$ and the comparability constants depend on $\alpha$. Nevertheless we are going to need a slightly modified variation of this comparability. For the sake of clarity and completeness, we follow a standard procedure to provide this comparability whenever $\alpha\in (0,1)$; see \cite[Theorem 2.4.4]{Primer} for the case $\alpha=1$.

\begin{lemma}\label{lem:energy}
Fix $\alpha_1\in(0,1).$ If $\mu$ is a Borel probability measure on $\TT,$ then there exists $C>0$ depending solely on $\alpha_1$ such that
\begin{equation*}
\sum_{k=0}^{+\infty}\frac{\Gamma(k+1-\alpha)}{\Gamma(1-\alpha)k!}|\hat{\mu}(k)|^2\leq I_\alpha[\mu]\leq C\sum_{k=0}^{+\infty}\frac{\Gamma(k+1-\alpha)}{\Gamma(1-\alpha)k!}|\hat{\mu}(k)|^2,
\end{equation*}
for all $\alpha\in [\alpha_1,1).$
\end{lemma}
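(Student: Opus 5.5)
We will compare $I_\alpha[\mu]$ with an Abel-regularized version built from the binomial series \eqref{eq:binomial} with $\beta=1-\alpha\in(0,1-\alpha_1]$. For $r\in[0,1)$ and $\zeta,\eta\in\TT$ set
\begin{equation*}
G_r(\zeta,\eta)\vcentcolon=\mathrm{Re}\Big(\frac{1}{(1-r\zeta\overline{\eta})^{1-\alpha}}\Big)=\mathrm{Re}\sum_{k=0}^{+\infty}\frac{\Gamma(k+1-\alpha)}{\Gamma(1-\alpha)k!}r^k(\zeta\overline{\eta})^k .
\end{equation*}
For fixed $r<1$ the series converges uniformly on $\TT\times\TT$, so integrating term by term against $d\mu(\zeta)d\mu(\eta)$ and using \eqref{eq:fourier} gives
\begin{equation*}
\int_{\TT}\int_{\TT}G_r(\zeta,\eta)\,d\mu(\eta)d\mu(\zeta)=\sum_{k=0}^{+\infty}\frac{\Gamma(k+1-\alpha)}{\Gamma(1-\alpha)k!}r^k|\hat{\mu}(k)|^2 .
\end{equation*}
(The real part is real-valued, and $\int\int(\zeta\overline\eta)^k\,d\mu d\mu=|\hat\mu(k)|^2$.) Denote the limit of the right-hand side as $r\to1^-$ by $S_\alpha[\mu]$; it exists in $[0,+\infty]$ by monotone convergence since all coefficients are positive.

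For the upper bound $I_\alpha[\mu]\le C S_\alpha[\mu]$, I would use Lemma \ref{Real estimation} with $z=r\zeta$: for every $r<1$,
\begin{equation*}
|1-r\zeta\overline{\eta}|^{\alpha-1}\le C\,G_r(\zeta,\eta),
\end{equation*}
with $C=(\cos((1-\alpha_1)\pi/2))^{-1}$ uniform in $\alpha\in[\alpha_1,1)$. As $r\to1^-$, $|1-r\zeta\overline\eta|^{\alpha-1}\to|\zeta-\eta|^{\alpha-1}=K_\alpha(|\zeta-\eta|)$ (with value $+\infty$ on the diagonal, consistent with the energy). Fatou's lemma then gives $I_\alpha[\mu]\le \liminf_r C\iint G_r\,d\mu d\mu=C\,S_\alpha[\mu]$.

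For the lower bound $S_\alpha[\mu]\le I_\alpha[\mu]$, I would use the trivial inequality $G_r\le |1-r\zeta\overline\eta|^{\alpha-1}$ together with the elementary estimate $|1-r w|\ge \tfrac12|1-w|$ for $w\in\TT$, $r\in[0,1)$. (If $w=e^{it}$, then $|1-rw|^2=(1-r)^2+4r\sin^2(t/2)$; this is at least $\sin^2(t/2)$ for $r\ge 1/2$ and at least $1/4\ge\sin^2(t/2)/4$ for $r<1/2$.) This gives domination by $2^{1-\alpha}K_\alpha$, which suffices for dominated convergence when $I_\alpha[\mu]<\infty$, but it loses a constant.

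To get constant exactly $1$, I expect the main obstacle to be this monotonicity step. I would show that $r\mapsto |1-rw|$ is at most $|1-w|$ when $\mathrm{Re}\,w\ge$ something, and handle the general case via $G_r=P_r*G_1$-type reasoning. The cleanest route is to note that $G_r(\zeta,\eta)$ is the Poisson integral in $\zeta$ of the boundary function $\mathrm{Re}(1-\zeta\overline\eta)^{\alpha-1}$, which is integrable since $1-\alpha<1$. Hence
\begin{equation*}
\iint G_r\,d\mu d\mu=\iint\!\!\int P_r(\zeta\bar\xi)\,\mathrm{Re}(1-\xi\bar\eta)^{\alpha-1}\frac{|d\xi|}{2\pi}\,d\mu d\mu .
\end{equation*}
Combining this with $\mathrm{Re}(1-w)^{\alpha-1}\le|1-w|^{\alpha-1}$ and the positive-definiteness of $K_\alpha$ (its Fourier coefficients, which are the Gamma ratios, are positive) gives $\iint G_r\le I_\alpha[\mu]$. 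If this proves fiddly, I would accept a harmless absolute constant $2^{1-\alpha}\le 2$ on the left, which is all that later uniform-in-$\alpha$ arguments need. Letting $r\to1^-$ completes both inequalities.
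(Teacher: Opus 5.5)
Your upper bound is the same as the paper's (Lemma~\ref{Real estimation} at $z=r\zeta$, then Fatou as $r\to1^-$), and it is correct.

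The gap is the lower bound, which the lemma asserts with constant exactly $1$. The estimate $|1-rw|\ge\frac12|1-w|$ only gives $S_\alpha[\mu]\le 2^{1-\alpha}I_\alpha[\mu]$. Falling back on that constant proves a weaker statement than the lemma.

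The positive-definiteness repair is not a proof as written. Write $\kappa(w)=|1-w|^{\alpha-1}$.
\begin{itemize}
\item The Fourier coefficients of $\kappa$ are not the Gamma ratios; those are the coefficients of the analytic function $(1-w)^{\alpha-1}$. For example, $\hat\kappa(0)=\int_{\TT}|1-w|^{\alpha-1}\frac{|dw|}{2\pi}>1$ by Jensen's inequality, since $\int_{\TT}\log|1-w|\,|dw|=0$, whereas the $k=0$ Gamma ratio equals $1$.
\item More importantly, after bounding $\iint G_r\,d\mu\,d\mu$ by $\sum_{k\in\mathbb{Z}}r^{|k|}\hat\kappa(k)|\hat\mu(k)|^2$, the energy of $\mu$ against the Poisson smoothing of $\kappa$, you still need this quantity to be at most $I_\alpha[\mu]$. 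Positivity of $\hat\kappa(k)$ only makes it increase in $r$. Fatou shows its limit is at least $I_\alpha[\mu]$, which is the wrong direction. The reverse inequality needs a domination estimate on the Poisson integral of $\kappa$, and you do not supply one.
\end{itemize}
So the claim that this ``gives $\iint G_r\le I_\alpha[\mu]$'' is asserted, not proved.

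The missing idea is a sharper form of the identity you already wrote down. With $w=e^{it}$ and $s=\sin^2(t/2)\in[0,1]$,
\[
|1-rw|^2-\Big(\frac{1+r}{2}\Big)^2|1-w|^2=(1-r)^2+4rs-(1+r)^2s=(1-r)^2(1-s)\ge 0 .
\]
This says $|1-w|/|1-rw|\le 2/(1+r)$, which is exactly the inequality the paper uses. Consequently
\[
\sum_{k=0}^{+\infty}\frac{\Gamma(k+1-\alpha)}{\Gamma(1-\alpha)k!}r^k|\hat{\mu}(k)|^2=\iint G_r\,d\mu\,d\mu\le\iint\frac{d\mu(\zeta)\,d\mu(\eta)}{|1-r\zeta\overline{\eta}|^{1-\alpha}}\le\Big(\frac{2}{1+r}\Big)^{1-\alpha}I_\alpha[\mu].
\]
Letting $r\to1^-$, with monotone convergence on the left, gives the lower bound with constant exactly $1$. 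No Fourier analysis of $\kappa$ is needed.
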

\begin{proof}
Let $\alpha\in[a_1,1)$. Applying Lemma~\ref{Real estimation} and relations \eqref{eq:binomial}, \eqref{eq:fourier}, we obtain
\begin{align}\label{eq:comparability2}
\notag \sum_{k=0}^{+\infty}\frac{\Gamma(k+1-\alpha)}{\Gamma(1-\alpha)k!}r^k|\hat{\mu}(k)|^2&\leq \int_{\TT}\int_{\TT}\frac{1}{|1-r\zeta\overline{\eta}|^{1-\alpha}}d\mu(\zeta)d\mu(\eta)\\
&\leq  C\sum_{k=0}^{+\infty}\frac{\Gamma(k+1-\alpha)}{\Gamma(1-\alpha)k!}r^k|\hat{\mu}(k)|^2,
\end{align}
where $C$ is the constant from Lemma \ref{Real estimation} and depends exclusively on $\alpha_1$. Thus, we have established the comparabality of the double integral in \eqref{eq:comparability2} with the series in \eqref{eq:comparability2}. 
Moreover, if $I_\alpha[\mu]<+\infty$ and $r\in(0,1),$ then
\begin{equation}\label{eq:energy 1}
\frac{2^{1-\alpha}}{(1+r)^{1-\alpha}}I_\alpha[\mu]\geq
\int_{\TT}\int_{\TT}\frac{1}{|1-r\zeta\overline{\eta}|^{1-\alpha}}d\mu(\zeta)d\mu(\eta),
\end{equation}
where the inequality $|1-w|/|1-rw|\leq 2/(1+r)$, for all $w\in \overline{\DD}$ and all $r\in (0,1)$, has been used. Combining \eqref{eq:comparability2}, \eqref{eq:energy 1} and Lebesgue's Dominated Convergence Theorem yields the left-hand side of the assertion, while Fatou's lemma provides the right-hand side. 
\end{proof}

\begin{remark}
Utilizing Gautschi's inequality \eqref{eq:gautschi} and the constant $C=(\cos((1-\alpha_1)\pi/2))^{-1}>1$ from the proof of Lemma \ref{Real estimation}, we obtain
\begin{equation*}
 |\hat{\mu}(0)|^2+\frac{1}{\Gamma(1-\alpha)}\sum_{k=1}^{+\infty}\frac{|\hat{\mu}(k)|^2}{k^\alpha}\leq I_\alpha[\mu]\leq \frac{(\cos((1-\alpha_1)\pi/2))^{-1}}{\Gamma(1-\alpha)}\sum_{k=0}^{+\infty}\frac{|\hat{\mu}(k)|^2}{(k+1)^\alpha},    
\end{equation*}
for all $\alpha\in [\alpha_1,1)$. This also yields the well known comparability in \cite[p. 566]{EFKR}.
\end{remark}

We continue with the following lemma which constructs a function belonging to prescribed Dirichlet-type spaces and satisfying certain useful properties.
\begin{lemma}\label{real}
Fix $\alpha_1\in (0,1)$. Let $E\subset \TT$ be closed, $\alpha\in[\alpha_1,1)$, and $\mu$ be a Borel probability measure supported on $E$ with $I_{\alpha}[\mu]<+\infty$. Then, the function $f_{\alpha,\mu}\vcentcolon\DD\to \CC$ defined via the formula
\begin{equation}\label{eq:main lemma 1}
f_{\alpha,\mu}(z)\vcentcolon=\int_{\TT}\frac{1}{(1-z\overline{\eta})^{1-\alpha}}d\mu(\eta), \quad z\in \DD,
\end{equation}
is well defined and enjoys the following properties:
\begin{enumerate}
    \item[\textup{$(\alpha)$}] $f_{\alpha,\mu}\in \cD_{\alpha}$ and $\|f_{\alpha,\mu}\|_{\alpha}\leq C I_{\alpha}[\mu]^{1/2}$, where $C\ge1$ is a constant that does not depend on $\alpha\in [\alpha_1,1)$;
    \item[\textup{$(\beta)$}] $|f_{\alpha,\mu}(z)|\leq 1/\textup{dist}(z,E)^{1-\alpha}$, for all $z\in \DD$;
    \item[\textup{$(\gamma)$}] $\textrm{Re}f_{\alpha,\mu}(z)> 0$, for all $z\in \DD;$
    \item[\textup{$(\delta)$}] there exists a constant $C\ge1$, not depending on $\alpha\in[\alpha_1,1)$, such that $\textrm{Re}f^{*}_{\alpha,\mu}\geq \frac{1}{C}K_{\alpha,\mu}$, $\alpha$-almost everywhere in $\TT$;
    \item[\textup{$(\epsilon)$}] $\textrm{Re}f^{*}_{\alpha,\mu}\in L^1(\TT)$.
\end{enumerate}
\end{lemma}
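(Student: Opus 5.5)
Well-definedness is immediate: for fixed $z\in\DD$ we have $|1-z\overline{\eta}|\ge 1-|z|>0$ uniformly in $\eta\in\TT$, so the integrand in \eqref{eq:main lemma 1} is bounded and continuous in $\eta$. Holomorphy in $z$ follows by differentiating under the integral, or from the series below. The plan is to prove each property from the power series of $f_{\alpha,\mu}$ together with Lemmas \ref{Real estimation} and \ref{lem:energy}.

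\emph{Property $(\alpha)$.} By the binomial formula \eqref{eq:binomial} with $\beta=1-\alpha$, and by uniform convergence in $\eta$ for fixed $z$, we get
\[
f_{\alpha,\mu}(z)=\sum_{k\ge0}c_k\hat\mu(k)z^k,\qquad c_k=\frac{\Gamma(k+1-\alpha)}{\Gamma(1-\alpha)k!}.
\]
Then $\|f_{\alpha,\mu}\|_\alpha^2=\sum_k (k+1)^\alpha c_k^2|\hat\mu(k)|^2$. The key step is to show $(k+1)^\alpha c_k\le C'$ uniformly for $\alpha\in[\alpha_1,1)$. Gautschi's inequality \eqref{eq:gautschi} gives $c_k\le k^{\alpha-1}/\Gamma(1-\alpha)$ for $k\ge1$. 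Since $1/\Gamma(1-\alpha)\le 1$ on this range (indeed $\Gamma\ge 0.88$ on $(0,1]$), this yields $(k+1)^\alpha c_k\le 2\,k^{\alpha}c_k\le 2$ after using $c_k\le k^{\alpha-1}$; the term $k=0$ is trivial. Hence $\|f_{\alpha,\mu}\|_\alpha^2\le C'\sum_k c_k|\hat\mu(k)|^2\le C' I_\alpha[\mu]$ by the left inequality of Lemma \ref{lem:energy}. The only care needed is to keep track of the uniformity of the constants in $\alpha$.

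\emph{Properties $(\beta)$ and $(\gamma)$.} For $(\beta)$, write $|1-z\overline\eta|=|\eta-z|\ge \mathrm{dist}(z,E)$ for $\eta\in E$, and use that $\mu$ is a probability measure supported on $E$. For $(\gamma)$, apply \eqref{Re}: the real part of the integrand is $\cos((1-\alpha)\arg(1-z\overline\eta))/|1-z\overline\eta|^{1-\alpha}>0$, since the argument lies in $(-\pi/2,\pi/2)$. Integrating this positive quantity gives $\mathrm{Re}f_{\alpha,\mu}>0$.

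\emph{Properties $(\delta)$ and $(\epsilon)$.} For $(\delta)$, Lemma \ref{Real estimation} gives $\mathrm{Re}f_{\alpha,\mu}(r\zeta)\ge C^{-1}\int |1-r\zeta\overline\eta|^{\alpha-1}d\mu(\eta)$. Since $f_{\alpha,\mu}\in\cD_\alpha$, the radial limit $f^*$ exists $\alpha$-a.e. (Carleson's theorem quoted in Section \ref{subsec:dirichlet}). Letting $r\to1^-$ and applying Fatou's lemma, the right-hand side has $\liminf$ at least $C^{-1}\int|\zeta-\eta|^{\alpha-1}d\mu=C^{-1}K_{\alpha,\mu}(\zeta)$. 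For $(\epsilon)$, $\mathrm{Re}f_{\alpha,\mu}$ is a positive harmonic function, so $\int_\TT \mathrm{Re}f_{\alpha,\mu}(r\zeta)|d\zeta|=2\pi\,\mathrm{Re}f_{\alpha,\mu}(0)=2\pi$. Fatou's lemma then gives $\mathrm{Re}f^*_{\alpha,\mu}\in L^1(\TT)$. Alternatively, $f_{\alpha,\mu}\in\cD_\alpha\subset H^2$, so $f^*\in L^2(\TT)$.

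I expect the only real obstacle to be the uniform bound on $(k+1)^\alpha c_k$ in $(\alpha)$; everything else is routine.
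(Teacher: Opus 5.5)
Your plan follows the paper's proof for $(\alpha)$ through $(\delta)$: the binomial expansion combined with the left-hand inequality of Lemma \ref{lem:energy}; $|1-z\overline{\eta}|=|\eta-z|$; formula \eqref{Re} or Lemma \ref{Real estimation}; and Fatou's lemma along radii. However, the step you call the crux of $(\alpha)$ is wrong as written.

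With $\beta=1-\alpha$, Gautschi's inequality \eqref{eq:gautschi} reads $k^{\alpha}<\Gamma(k+1)/\Gamma(k+1-\alpha)$. It therefore gives
\[
c_k=\frac{\Gamma(k+1-\alpha)}{\Gamma(1-\alpha)\,k!}<\frac{k^{-\alpha}}{\Gamma(1-\alpha)},\qquad k\ge1,
\]
and not $c_k\le k^{\alpha-1}/\Gamma(1-\alpha)$. Your bound fails for large $k$ whenever $\alpha<1/2$, because $c_k\sim k^{-\alpha}/\Gamma(1-\alpha)$. Even where it holds, it only yields $k^{\alpha}c_k\le k^{2\alpha-1}$, which is unbounded for $\alpha>1/2$. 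So the chain $(k+1)^\alpha c_k\le 2k^\alpha c_k\le 2$ does not follow from what you wrote.

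With the correct exponent you get $(k+1)^\alpha c_k\le 2^\alpha/\Gamma(1-\alpha)\le 2$ for $k\ge1$. The last step uses that $\Gamma$ is decreasing on $(0,1]$ with $\Gamma(1)=1$. Your remark that $\Gamma\ge 0.88$ does not give $1/\Gamma(1-\alpha)\le 1$, although it would still give a uniform constant. Together with the trivial $k=0$ term, this gives $(k+1)^\alpha c_k^2|\hat\mu(k)|^2\le 2c_k|\hat\mu(k)|^2$, and Lemma \ref{lem:energy} finishes as in the paper. The paper uses $\Gamma(1-\alpha)\ge\Gamma(1-\alpha_1)$ where you use $\Gamma(1-\alpha)\ge1$.

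For $(\epsilon)$ your route differs from the paper's. The paper bounds $\int_\TT \mathrm{Re}f_{\alpha,\mu}(r\zeta)|d\zeta|$ by $\int_\TT\int_\TT|1-r\zeta\overline{\eta}|^{\alpha-1}|d\zeta|\,d\mu(\eta)$ using Fubini, then cites a uniform integrability estimate for this kernel from Rudin. You instead use the mean value property, $\int_\TT\mathrm{Re}f_{\alpha,\mu}(r\zeta)|d\zeta|=2\pi\,\mathrm{Re}f_{\alpha,\mu}(0)=2\pi$ (since $\hat\mu(0)=1$), followed by Fatou's lemma, which is legitimate because $\mathrm{Re}f_{\alpha,\mu}>0$ by $(\gamma)$. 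Your argument needs no kernel estimate and gives the explicit bound $\|\mathrm{Re}f^*_{\alpha,\mu}\|_{L^1}\le 2\pi$. Your alternative via $\cD_\alpha\subset H^2$ is also valid once $(\alpha)$ is established.
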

\begin{proof}
\textup{($\alpha$)} The function $f_{\alpha,\mu}$ is holomorphic in $\DD$ with power series expansion given by
\begin{equation}\label{eq:main lemma 2}
    f_{\alpha,\mu}(z)=\sum_{k=0}^{+\infty}\frac{\Gamma(k+1-\alpha)}{\Gamma(1-\alpha)k!}\hat{\mu}(k)z^k, \quad z\in \DD,
\end{equation}
something that can be computed by combining \eqref{eq:binomial} with \eqref{eq:main lemma 1} and executing simple calculations. Hence, by \eqref{eq:norm} and \eqref{eq:main lemma 2},
\begin{align*}
\|f_{\alpha,\mu}\|_\alpha^2 &= \sum_{k=0}^{+\infty}(k+1)^\alpha\frac{\Gamma(k+1-\alpha)^2}{\Gamma(1-\alpha)^2(k!)^2}|\hat{\mu}(k)|^2 \\
&=1+\frac{1}{\Gamma(1-\alpha)}\sum_{k=1}^{+\infty}\frac{\Gamma(k+1-\alpha)}{\Gamma(1-\alpha)k!}\frac{(k+1)^\alpha\Gamma(k+1-\alpha)}{k!}|\hat{\mu}(k)|^2\\
&\le 1+\frac{1}{\Gamma(1-\alpha)}\sum_{k=1}^{+\infty}\frac{\Gamma(k+1-\alpha)}{\Gamma(1-\alpha)k!}\frac{(k+1)^\alpha\Gamma(k+1)}{k^\alpha k!}|\hat{\mu}(k)|^2\\
&\le 1+\frac{2^\alpha}{\Gamma(1-\alpha)}\sum_{k=1}^{+\infty}\frac{\Gamma(k+1-\alpha)}{\Gamma(1-\alpha)k!}|\hat{\mu}(k)|^2\\
&\le \max\left\{1,\frac{2^\alpha}{\Gamma(1-\alpha)}\right\}I_\alpha[\mu]\\
&\le \max\left\{1,\frac{2}{\Gamma(1-\alpha_1)}\right\}I_\alpha[\mu],
\end{align*}
where in the second line we just did some recombinations, in the third we applied Gautschi's inequality, in the fourth we made use of the facts that $k+1\le 2k$ and $\Gamma(k+1)=k!$, for all $k\in\mathbb{N}$, in the fifth we applied Lemma \ref{lem:energy}, and in the last we made use of the fact that the Gamma function is decreasing in $(0,1),$ and hence, $\Gamma(1-\alpha)\ge\Gamma(1-\alpha_1),$ whenever $\alpha\in[\alpha_1,1)$. Setting $C=\max\{1,\sqrt{2/\Gamma(1-\alpha_1)}\}$, we obtain the desired absolute constant.\\[2pt]
\textup{($\beta$)} Since $E$ is the compact support of $\mu$, integration with respect to $\mu$ outside of $E$ yields zero. Thus, returning to \eqref{eq:main lemma 1}, we get
\begin{equation*}
|f_{\alpha,\mu}(z)|\leq\int_{E}\frac{1}{|1-z\overline{\eta}|^{1-\alpha}}d\mu(\eta)
\leq \int_{E}\frac{1}{\textrm{dist}(z,E)^{1-\alpha}}d\mu(\eta)
=\frac{1}{\textrm{dist}(z,E)^{1-\alpha}},
\end{equation*}
for all $z\in \DD$, since $\mu$ is a probability measure and so $\mu(E)=1$.\\[2pt]
\textup{($\gamma$)}  Lemma \ref{Real estimation}, in conjunction with \eqref{eq:main lemma 1}, implies
\begin{equation*}
  \mathrm{Re}f_{\alpha,\mu}(z)=\int_{\TT}\mathrm{Re}\Big(\frac{1}{(1-z\overline{\eta})^{1-\alpha}}\Big)d\mu(\eta)\geq\int_{\TT}\frac{1}{C}\frac{1}{|1-z\overline{\eta}|^{1-\alpha}}d\mu(\eta)> 0,  
\end{equation*}
for all $z\in \DD$.\\[2pt]
\textup{($\delta$)} Since $f_{\alpha,\mu}\in \cD_a$, the radial limit function $f^*_{\alpha,\mu}$ exists in $\TT$, except possibly on a set having Riesz $\alpha$-capacity zero. Pick a point $\zeta \in\TT$ such that $f^*_{\alpha,\mu}(\zeta)$ exists. By Fatou's Lemma, Lemma \ref{Real estimation} and \eqref{eq:potential}, we obtain
\begin{align*}
\textrm{Re}f^*_{\alpha,\mu}(\zeta)&\geq 
\int_E\liminf_{z\rightarrow \zeta}\textrm{Re}\Big(\frac{1}{(1-z\overline{\eta})^{1-\alpha}}\Big)d\mu(\eta)\\
&\geq \frac{1}{C}\int_{E}\liminf_{z\rightarrow\zeta}\frac{1}{|1-z\overline{\eta}|^{1-\alpha}}d\mu(\eta)\\
&=\frac{1}{C}K_{\alpha,\mu}(\zeta).
\end{align*}
Note that by the proof of Lemma \ref{Real estimation}, the constant $C$ can be chosen to be $(\cos((1-\alpha_1)\pi/2))^{-1}$, and hence its value does not depend on $\alpha\in[\alpha_1,1)$, but merely on the initially fixed $\alpha_1$.\\[2pt]
\textup{($\epsilon$)} By Fatou's Lemma and Fubini's Theorem, we have
\begin{align*}
\int_{\TT}\textrm{Re}f^*_{\alpha,\mu}(\zeta)|d\zeta|&\leq \liminf_{r\rightarrow1^-} \int_{\TT}\textrm{Re}f_{\alpha,\mu}(r\zeta)|d\zeta|
\leq  \liminf_{r\rightarrow1^-} \int_{\TT}\int_{\TT}\frac{1}{|1-r\zeta\overline{\eta}|^{1-\alpha}}|d\zeta|d\mu(\eta),
\end{align*}
and the last limit inferior is finite because $\alpha\in (0,1)$; see \cite[Proposition 1.4.10]{Rudin}. Thus, we infer that $\textrm{Re}f^*_{\alpha,\mu}\in L^1(\TT)$.
\end{proof}

With all the above lemmas and tools in hand, we are able to build the desired counterexample.

\begin{theorem}\label{construction of function1}
Let $E\subset \TT$ be a closed set for which we may find a strictly increasing sequence $\{\alpha_n\}_{n\in \mathbb N}\subset (0,1)$ such that $\lim_{n\to+\infty}\alpha_n=1$ and $\mathrm{cap}_{\alpha_n}(E)=0$, for all $n\in \mathbb{N}$. Then, there exists $F\in \cD_\alpha$, for all $\alpha<1$, such that $\lim_{z\rightarrow \zeta}\mathrm{Re}F(z)=+\infty$, for all $\zeta\in E$. The function $F$ may be chosen to be continuous on $\overline{\DD}\setminus E$.
\end{theorem}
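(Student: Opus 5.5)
The plan is to adapt the Brown--Cohn/Carleson construction: build $F$ as a weighted series $F=\sum_n c_n f_{\alpha_n,\mu_n}$, where each term is furnished by Lemma \ref{real}. The measures $\mu_n$ will come from Frostman-type equilibrium measures on shrinking neighborhoods of $E$.

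\textbf{Choice of sets and measures.} Since $\mathrm{cap}_{\alpha_n}(E)=0$ and capacity is outer regular on compacts, for each $n$ we can choose a closed neighborhood $U_n$ of $E$ in $\TT$ (a finite union of closed arcs containing $E$ in its relative interior) with $\mathrm{cap}_{\alpha_n}(U_n)$ as small as we like, say $\mathrm{cap}_{\alpha_n}(U_n)\le 4^{-n}$. We also arrange $U_{n+1}\subset U_n$. Let $\nu_n$ be the $\alpha_n$-equilibrium measure of $U_n$, so that $I_{\alpha_n}[\nu_n]=1/\mathrm{cap}_{\alpha_n}(U_n)\ge 4^n$. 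By Frostman's theorem (ii), $K_{\alpha_n,\nu_n}=I_{\alpha_n}[\nu_n]$ holds $\alpha_n$-a.e. on $U_n$; since $U_n$ contains arcs, this holds a.e. on its interior, and by lower semicontinuity of the potential it holds everywhere there. Put $g_n=f_{\alpha_n,\nu_n}/I_{\alpha_n}[\nu_n]$ and $F=\sum_n g_n$.

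\textbf{Membership and boundary behaviour.} By Lemma \ref{real}$(\alpha)$, $\|g_n\|_{\alpha_n}\le C\,I_{\alpha_n}[\nu_n]^{-1/2}\le C2^{-n}$. Here $C$ is uniform for $\alpha_n\ge\alpha_1$, which is exactly why the uniform versions of the lemmas were proved. For a fixed $\alpha<1$ we have $\alpha_n\ge\alpha$ for all large $n$, so $\|g_n\|_\alpha\le\|g_n\|_{\alpha_n}$ is summable, while the finitely many remaining terms lie in $\cD_{\alpha_n}\subset\cD_\alpha$. Hence $F\in\cD_\alpha$ for every $\alpha<1$.

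For the blow-up, fix $\zeta\in E$. By Lemma \ref{real}$(\gamma)$ every $\mathrm{Re}\,g_n>0$, so $\mathrm{Re}F\ge\sum_{n\le N}\mathrm{Re}\,g_n$. I would show that $\liminf_{z\to\zeta}\mathrm{Re}\,g_n(z)\ge 1/C$ for each $n$. Then $\liminf_{z\to\zeta}\mathrm{Re}F\ge N/C$ for every $N$, which gives $\lim_{z\to\zeta}\mathrm{Re}F(z)=+\infty$.

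\textbf{Main obstacle.} This lower bound is the main difficulty, since it needs control of $\mathrm{Re}\,g_n$ for all $z\to\zeta$, not just radially. I would argue as follows.
\begin{itemize}
\item $\mathrm{Re}\,f_{\alpha_n,\nu_n}$ is a positive harmonic function whose boundary values satisfy $\mathrm{Re}f^*\ge K_{\alpha_n,\nu_n}/C$ by Lemma \ref{real}$(\delta)$, and $\mathrm{Re}f^*\in L^1$ by Lemma \ref{real}$(\epsilon)$.
\item Use the bound $\mathrm{Re}f\ge P[\mathrm{Re}f^*]$. This holds because $\mathrm{Re}f$ is the Poisson integral of a positive measure whose absolutely continuous part is $\mathrm{Re}f^*$.
\item Since $K_{\alpha_n,\nu_n}=I_{\alpha_n}[\nu_n]$ on $U_n^\circ$, this gives $\mathrm{Re}\,g_n(z)\ge\frac1C\,\omega(z,U_n^\circ,\DD)$.
\item As $\zeta$ lies in the relative interior of $U_n$, Theorem \ref{thm:harmonic measure} yields $\omega(z,U_n^\circ,\DD)\to1$ as $z\to\zeta$.
\end{itemize}

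\textbf{Continuity on $\overline{\DD}\setminus E$.} Choose the $U_n$ to shrink to $E$, i.e.\ $\bigcap_n U_n=E$. On a compact set $K\subset\overline{\DD}\setminus E$ we have $\mathrm{dist}(K,U_n)\ge\delta>0$ for large $n$. Lemma \ref{real}$(\beta)$ then gives $|g_n|\le I_{\alpha_n}[\nu_n]^{-1}\delta^{-(1-\alpha_n)}\le 4^{-n}\delta^{-1}$ on $K$. Each $g_n$ is continuous off $U_n$, so for large $n$ the tail $\sum g_n$ converges uniformly on $K$ to a continuous function. The finitely many early terms are problematic only on $U_n\setminus E$. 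To handle them, I would instead pick the $U_n$ inductively and select only a tail of terms adapted to $K$; alternatively, I would note that $f_{\alpha,\mu}$ extends continuously off $\mathrm{supp}\,\mu$ and replace $\nu_n$ by measures supported in $U_n$ with neighborhoods chosen so that every point of $\TT\setminus E$ lies outside all but finitely many $U_n$. The remaining work is to check that continuity still holds across $\partial U_n$ for those finitely many terms, and I would handle it with this rearrangement.
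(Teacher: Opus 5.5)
Your architecture matches the paper's: closed neighbourhoods $U_n\downarrow E$ of small $\alpha_n$-capacity, equilibrium measures, $g_n=\mathrm{cap}_{\alpha_n}(U_n)f_{\alpha_n,\nu_n}$, the uniform constants of Lemma \ref{real}, and the bound $\mathrm{Re}\,g_n\ge\frac1C\,\omega(\cdot,U_n^\circ,\DD)$. The blow-up argument is fine. (Drop the lower-semicontinuity sentence: it only yields $K_{\alpha_n,\nu_n}\le I_{\alpha_n}[\nu_n]$ at limit points, which you already have. Lebesgue-a.e.\ equality on $U_n^\circ$ suffices for the Poisson integral, since sets of zero $\alpha_n$-capacity are Lebesgue-null.)

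There are two genuine gaps.

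\emph{Membership.} For fixed $\alpha<1$, the finitely many terms with $\alpha_n<\alpha$ are not shown to lie in $\cD_\alpha$. The family is decreasing, so $\cD_\alpha\subset\cD_{\alpha_n}$, not the reverse. Lemma \ref{real}$(\alpha)$ only puts $f_{\alpha_n,\nu_n}$ in $\cD_{\alpha_n}$. Its $\cD_\alpha$-norm is comparable to $\sum_k k^{\alpha-2\alpha_n}|\hat\nu_n(k)|^2$, which finite $\alpha_n$-energy does not control in general.

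\emph{Continuity.} Nothing you have proved controls the undilated $g_n$ near points of $\TT\cap(\mathrm{supp}\,\nu_n\setminus E)$, and $\mathrm{supp}\,\nu_n$ may be all of $U_n$. Your remedies do not fix this. Keeping a tail adapted to $K$ does not give a single function $F$ independent of $K$, and the ``rearrangement'' does not touch the finitely many terms whose support contains a given point of $\TT\setminus E$.

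The missing device, which the paper uses and which repairs both gaps at once, is dilation.

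\emph{Choice of $r_n$.} Since $E$ is compact inside the relatively open $U_n^\circ$, there is $\eta>0$ such that the arc of half-length $\eta$ centred at any $\zeta\in E$ lies in $U_n^\circ$. By monotonicity and rotation invariance, $\omega(r\zeta,U_n^\circ,\DD)\to1$ uniformly in $\zeta\in E$ as $r\to1^-$. So you can pick $r_n\uparrow1$ with $\mathrm{Re}\,g_n(r_n\zeta)\ge 1/(2C)$ for all $\zeta\in E$, and set $F(z)=\sum_n g_n(r_nz)$.

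\emph{Why this works.}
\begin{enumerate}
\item[(i)] Each term is holomorphic on a neighbourhood of $\overline{\DD}$, hence lies in every $\cD_\alpha$.
\item[(ii)] Dilation multiplies the $k$-th Taylor coefficient by $r_n^k$. Hence $\|g_n(r_n\cdot)\|_\alpha\le\|g_n\|_{\alpha_n}\le C2^{-n}$ whenever $\alpha_n\ge\alpha$, and your tail estimate survives.
\item[(iii)] Each dilated term is continuous at $\zeta\in E$ with real part at least $1/(2C)$ there, and all real parts are positive. This gives $\liminf_{z\to\zeta}\mathrm{Re}\,F(z)\ge N/(2C)$ for every $N$.
\item[(iv)] Continuity on $\overline{\DD}\setminus E$ now follows from your shrinking argument. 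If $z_0\in\TT\setminus E$, then $\mathrm{dist}(z_0,U_n)\ge\delta>0$ for all large $n$. For $|z-z_0|<\delta/2$ and $1-r_n\le\delta/4$ this gives $\mathrm{dist}(r_nz,U_n)\ge\delta/4$. By Lemma \ref{real}$(\beta)$ the tail then converges uniformly near $z_0$, while the finitely many early terms are continuous on all of $\overline{\DD}$.
\end{enumerate}
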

\begin{proof}
By assumption $\mathrm{cap}_{\alpha_n}(E)=0$. Therefore, we may create a sequence $\{E_n\}$ of compact subsets of $\TT$ such that $\mathrm{cap}_{\alpha_n}(E_n)\in(0,1)$, for all $n\in \mathbb{N}$, and
\begin{equation}\label{series cap 1/2}
\sum_{n=1}^{+\infty}\mathrm{cap}_{\alpha_n}(E_n)\le\sum_{n=1}^{+\infty}\mathrm{cap}_{\alpha_n}(E_n)^{1/2}<+\infty.
\end{equation}
Each $E_n$ can be chosen so that it is a neighborhood of $E$ in the topology of $\TT$, that is, each point of $E$ belongs to the non-empty interior of every $E_n$, $n\in\mathbb{N}$, with respect to the topology of the unit circle. Let $\nu_n$ be an equilibrium measure of $E_n$ with respect to $\mathrm{cap}_{\alpha_n}(E_n)>0$. In particular, the potential of $\nu_n$ satisfies 
\begin{equation}\label{K I cap}
K_{\alpha_n,\nu_n}=I_{\alpha_n}[\nu_n]=\frac{1}{\mathrm{cap}_{\alpha_n}(E_n)},   
\end{equation}
$\alpha_n$-almost everywhere on $E_n$; see Section \ref{sec:preliminaries}. Following the notation in Lemma \ref{real}, the functions $f_{\alpha_n,\nu_n}$ exist and $f_{\alpha_n,\nu_n}\in \Hol$. Thus, we may consider the function
\begin{equation*}
 f(z)\vcentcolon=\sum_{n=0}^{+\infty}\mathrm{cap}_{\alpha_n}(E_n)f_{\alpha_n,\nu_n}(z), \quad z\in \DD.   
\end{equation*}
Lemma \ref{real}\textup{($\beta$)} implies 
\begin{equation*}
    |f_{\alpha_n,\nu_n}(z)|\le \frac{1}{\mathrm{dist}(z,E_n)^{1-\alpha_n}}\le \frac{1}{(1-|z|)^{1-\alpha_n}}\le \frac{1}{(1-|z|)^{1-\alpha_1}},
\end{equation*}
for all $n\in\mathbb{N}$. Combining this with \eqref{series cap 1/2} and using the Weierstrass M-test, we see that the partial sums of $f$ converge locally uniformly to the well-defined function $f$. Then, Weierstrass' Theorem certifies that $f\in \Hol$. On another note, $\mathrm{Re}f_{\alpha_n,\nu_n}$ is positive (see Lemma \ref{real}\textup{($\gamma$)}) and harmonic in the unit disc. Ergo, it may be written as its Poisson integral, that is,
\begin{equation*}
\mathrm{Re}f_{\alpha_n,\nu_n}(z)=\frac{1}{2\pi}\int_{\TT}\frac{1-|z|^2}{|z-\eta|^2}\mathrm{Re}f^*_{\alpha_n,\nu_n}(\eta)|d\eta|.
\end{equation*} 
However, by Lemma \ref{real}\textup{($\delta$)}, we have
\begin{equation*}
\mathrm{Re}f^*_{\alpha_n,\nu_{n}}\geq \frac{1}{C}K_{\alpha_n,\nu_{n}},
\end{equation*}
$\alpha_n$-almost everywhere on $\TT$, where $C=\cos(((1-\alpha_1)\pi/2))^{-1}$ does not depend on $n\in\mathbb{N}$. As a result,
\begin{equation}\label{eq:poisson}
\mathrm{Re}f_{\alpha_n,\nu_n}(z)\ge \frac{1}{2\pi}\int_{\TT}\frac{1-|z|^2}{|z-\eta|^2}\frac{K_{\alpha_n,\nu_n}(\eta)}{C}|d\eta| \ge \frac{1}{2\pi}\int_{E_n}\frac{1-|z|^2}{|z-\eta|^2}\frac{K_{\alpha_n,\nu_n}(\eta)}{C}|d\eta|.
\end{equation}
Applying \eqref{K I cap} on \eqref{eq:poisson} yields
\begin{equation*}
\mathrm{Re}f_{\alpha_n,\nu_n}(z)\ge\frac{1}{C\mathrm{cap}_{\alpha_n}(E_n)}\frac{1}{2\pi}\int_{E_n}\frac{1-|z|^2}{|z-\eta|^2}|d\eta|.  \end{equation*}
Nevertheless, the integral at the right is exactly the harmonic measure of $E_n$ with respect to the unit disc evaluated at $z$, i.e. the quantity $\omega(z,E_n,\DD)$; see \eqref{eq:harmonic measure}. In particular, we have
\begin{equation*}
 \mathrm{Re}f_{\alpha_n,\nu_n}(z)\ge \frac{\omega(z,E_n,\DD)}{C\mathrm{cap}_{\alpha_n}(E_n)},   
\end{equation*}
for all $z\in\DD$. Fix $\zeta\in E$ and take limit inferiors to obtain
\begin{equation*}
   \liminf_{z\to\zeta}\mathrm{Re}f_{\alpha_n,\nu_n}(z)\ge \frac{1}{C\mathrm{cap}_{\alpha_n}(E_n)}\liminf_{z\to\zeta}\omega(z,E_n,\DD). 
\end{equation*}
Recall that the interior of each $E_n$ contains $E$, and hence, the fixed point $\zeta$ lies in the interior of every $E_n$. Therefore, Theorem \ref{thm:harmonic measure} yields $\lim_{z\to\zeta}\omega(z,E_n,\DD)=1$, for all $n\in\mathbb{N}$. Consequently,
\begin{equation}\label{AAA}
\liminf_{z\to\zeta}\mathrm{Re}f_{\alpha_n,\nu_n}(z)\ge\frac{1}{C\mathrm{cap}_{\alpha_n}(E_n)}, \quad\text{for all }\zeta\in E \text{ and all }n\in\mathbb{N}.
\end{equation}
Fix $N\in \mathbb{N}$. Then, by the definition of $f$,
\begin{equation*}
  \liminf_{z\rightarrow \zeta}\mathrm{Re}f(z)\geq \sum_{n=1}^N \mathrm{cap}_{\alpha_n}(E_n)\liminf_{z\rightarrow \zeta}\mathrm{Re}(f_{\alpha_n,\nu_{n}})(z)\geq \frac{N}{C},  
\end{equation*}
which in turn implies that
\begin{equation*}
  \lim_{z\rightarrow \zeta}\mathrm{Re}f(z)=+\infty, \quad \text{for all }\zeta\in E.  
\end{equation*}
So, $f$ satisfies one of the required conditions. Our next objective is to modify $f$ accordingly, so that it is rendered continuous on $\overline{\DD}\setminus E$. Pick an increasing sequence $\{r_n\}\subset(0,1)$ with $\lim_{n\to+\infty}r_n=1$ and set 
\begin{equation*}
f_n(z):=f_{\alpha_n,\nu_n}(r_nz), \quad n\in\mathbb{N}.    
\end{equation*}
By \eqref{AAA}, we may beforehand choose $\{r_n\}$, so that
\begin{equation*}
\mathrm{Re}f_{\alpha_n,\nu_n}(r_n\zeta)\geq \frac{1}{2C\mathrm{cap}_{\alpha_n}(E_n)},
\end{equation*}
for all $\zeta \in E$ and all $n\in\mathbb{N}$, due to the compactness of $E$. Finally, set 
\begin{equation*}
   F(z)\vcentcolon=\sum_{n=0}^{+\infty}\mathrm{cap}_{\alpha_n}(E_n)f_{n}(z), \quad z\in \DD. 
\end{equation*}
Exactly as before, $F\in \Hol$, it is continuous on $\overline{\DD}\setminus E$, and
\begin{equation*}
\lim_{z\rightarrow \zeta}\mathrm{Re}F(z)=+\infty,    
\end{equation*}
for all $\zeta\in E$. It remains to prove that $F\in \cD_\alpha$, for all $\alpha\in (\alpha_1,1)$. Note that $f_n\in \mathcal{O}(\overline{\DD})$, and thus, $f_n$ and the partial sums of $F$ lie in $\DDD_\alpha$, for all $\alpha\in \RR$. Pick $\alpha\in (\alpha_1,1)$. Then, there exists $n_0\in \mathbb N$ such that $\alpha_n>\alpha$ for all $n> n_0$. In light of the arguments in Lemma\ref{real}\textup{($\alpha$)}, we obtain that $f_n\in\mathcal{D}_{\alpha_n}$ and in particular
\begin{equation}\label{PPP}
\|f_n\|_{\alpha_n}\leq CI_{\alpha_n}[\nu_n]^{1/2}=C\mathrm{cap}_{\alpha_n}(E_n)^{-1/2},
\end{equation}
where the equality is due to the definition of the equilibrium measure, and $C>0$ does not depend on the sequence $(\alpha_n)_{n\geq n_0}$. Indeed, the constant may be chosen equal to $\max\{1,\sqrt{2/\Gamma(1-\alpha_1)}\}$. Hence, we obtain
\begin{align*}
\|F\|_\alpha&\leq\liminf_{N\rightarrow +\infty}\left|\left|\sum_{n=0}^{N}\mathrm{cap}_{\alpha_n}(E_n)f_{n}\right|\right|_\alpha\\
&\leq \liminf_{N\rightarrow +\infty}\left[\sum_{n=0}^{n_0}\mathrm{cap}_{\alpha_n}(E_n)\|f_{n}\|_\alpha +\sum_{n=n_0+1}^{N}\mathrm{cap}_{\alpha_n}(E_n)\|f_{n}\|_\alpha\right]\\
&\leq\sum_{n=0}^{n_0}\mathrm{cap}_{\alpha_n}(E_n)\|f_{n}\|_\alpha +\liminf_{N\rightarrow +\infty}\sum_{n=n_0+1}^{N}\mathrm{cap}_{\alpha_n}(E_n)\|f_{n}\|_{\alpha_n}\\
&\leq\sum_{n=0}^{n_0}\mathrm{cap}_{\alpha_n}(E_n)\|f_{n}\|_\alpha +C\liminf_{N\rightarrow +\infty}\sum_{n=n_0+1}^{N}\mathrm{cap}_{\alpha_n}(E_n)^{1/2}\\
&<+\infty,
\end{align*}
where we have consecutively used that $\|f_n\|_\alpha\le \|f_n\|_{\alpha_n}$, for all $n>n_0$, relation \eqref{PPP}, the fact that each $f_n$ belongs to $\cD_\alpha$, and finally relation \eqref{series cap 1/2}. Thus, $F\in\cD_{\alpha}$. Since the choice of $\alpha\in(\alpha_1,1)$ was arbitrary, we infer that $F\in\cD_\alpha$ for all $\alpha\in(\alpha_1,1)$. Finally, as we mentioned in Subsection \ref{subsec:dirichlet}, Dirichlet-type spaces form a decreasing family, and hence $F\in\cD_\alpha$ for all $\alpha<1$. Summing up, the function $F$ has all the desired properties and we are done.
\end{proof}

The theorem below provides the second critical counterexample described in the main \hyperref[Problem]{Problem} for $\alpha^*=1$. Its proof follows in the footsteps of \cite[Theorem A]{Brown-Cohn} and \cite[Theorem 4.3.2, Theorem 9.2.7]{Primer}. 

\begin{theorem}\label{thm:second construction}
    There exists a function $f\in\cD_\alpha\cap A(\DD)$, for all $\alpha<1$, which is cyclic in $\cD_\alpha$, for all $\alpha<1$, but cannot be cyclic in $\cD$.
\end{theorem}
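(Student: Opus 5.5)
We follow the Brown--Cohn scheme, with Theorem \ref{construction of function1} supplying the key input. Let $E=E_1\subset\TT$ be the generalized Cantor set of Proposition \ref{construction cantor a 1}, so that $\mathrm{cap}_\alpha(E)=0$ for all $\alpha\in(0,1)$ while $\mathrm{cap}_1(E)>0$. With any strictly increasing sequence $\alpha_n\uparrow 1$, the hypotheses of Theorem \ref{construction of function1} hold. It therefore gives $F\in\cD_\alpha$ for all $\alpha<1$, continuous on $\overline{\DD}\setminus E$, with $\mathrm{Re}F(z)\to+\infty$ as $z\to\zeta$ for every $\zeta\in E$. Replacing $F$ by $F+1$, we may assume $\mathrm{Re}F>0$ in $\DD$ (each summand has positive real part by Lemma \ref{real}$(\gamma)$). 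We then set
\begin{equation*}
f\vcentcolon=\frac{1}{F}.
\end{equation*}
Since $F$ is continuous on $\overline{\DD}\setminus E$ with $|F|\ge\mathrm{Re}F>0$ there, and $|F(z)|\to\infty$ as $z\to E$, the function $f$ extends continuously to $\overline{\DD}$ with $f^*=0$ exactly on $E$. Hence $f\in A(\DD)$ and $\mathcal{Z}(f^*)=E$. Moreover $f$ is outer, because $\mathrm{Re}F>0$ makes $F$ outer and hence $1/F$ outer.

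Next we show $f\in\cD_\alpha$ for every $\alpha<1$. Since $\mathrm{Re}F\ge 1$, the map $w\mapsto 1/w$ is Lipschitz on $\{\mathrm{Re}\,w\ge1\}$ with $|f'|=|F'|/|F|^2\le|F'|$. The integral characterization \eqref{eq:integral} then gives
\begin{equation*}
\int_\DD|f'|^2(1-|z|^2)^{1-\alpha}dA\le\int_\DD|F'|^2(1-|z|^2)^{1-\alpha}dA<+\infty.
\end{equation*}

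For cyclicity with $\alpha<1$ we apply Theorem \ref{thm: efkr 2}. Condition (i) holds since $f$ is outer, (ii) holds since $f\in A(\DD)$, and (iii) holds because $\mathcal{Z}(f^*)=E$ is itself a generalized Cantor set with $\mathrm{cap}_\alpha(E)=0$. Hence $f$ is cyclic in $\cD_\alpha$ for each $\alpha\in(0,1)$. For $\alpha\le0$ this follows from the remark that cyclicity in $\cD_{\alpha_2}$ implies cyclicity in $\cD_{\alpha_1}$ for $\alpha_1<\alpha_2$ (and in $\cD_0$ it also follows directly from outerness).

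For non-cyclicity in $\cD$, either $f\notin\cD$, in which case there is nothing to prove, or $f\in\cD$. In the latter case $\mathcal{Z}(f^*)=E$ has $\mathrm{cap}_1(E)>0$, so Theorem \ref{thm: efkr 1} forbids cyclicity. This is why the statement says $f$ \emph{cannot be} cyclic in $\cD$.

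The main obstacle is hidden in Theorem \ref{construction of function1}, namely the uniform-in-$\alpha$ norm control of $F$. Given that theorem, the step needing most care is the boundary behaviour of $1/F$: it must be continuous on all of $\overline{\DD}$, including at points of $E$ approached from $\overline{\DD}\setminus E$ along $\TT$. This should follow because $\mathrm{Re}F\to+\infty$ is an unrestricted limit in $z\to\zeta$. That limit is obtained in $\DD$, so we must check it also holds for approach along $\TT\setminus E$. This uses the continuity of $F$ on $\overline{\DD}\setminus E$ together with the uniform lower bound on $E$ given by the choice of $r_n$.
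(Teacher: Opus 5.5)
Your proof is correct, but it establishes cyclicity by a different route from the paper. The paper takes $f=e^{-F}$ and proves cyclicity directly, in the Brown--Cohn style. It writes $f=\exp(-\sum_n f_n)$ with $f_n\in\mathcal{O}(\overline{\DD})$ and $\mathrm{Re} f_n\ge 0$. The tails $F_N=\exp(-\sum_{n\ge N}f_n)$ are bounded in $\cD_\alpha$ and tend to $1$ pointwise, hence weakly. The quotients $F_N/f=\exp(\sum_{n<N}f_n)$ are multipliers, so $F_N\in[f]_\alpha$, and therefore $1\in[f]_\alpha$.

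You instead take $f=1/F$ and treat Theorem \ref{thm: efkr 2} as a black box. Your checks of its hypotheses are sound: outerness (indeed $\mathrm{Re}(1/F)>0$), continuity on $\overline{\DD}$ with $\mathcal{Z}(f^*)=E$, and $|f'|\le|F'|$. The paper itself applies Theorem \ref{thm: efkr 2} to these same Cantor sets in the proof of Theorem \ref{first}, so the application is legitimate here. A small point: the continuity of $1/F$ at $E$ along $\TT\setminus E$ needs only the continuity of $F$ on $\overline{\DD}\setminus E$ and the unrestricted limit inside $\DD$. The lower bound coming from the choice of $r_n$ is not needed.

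Each route buys something different. Yours is shorter, but once Theorem \ref{thm: efkr 2} is invoked, the particular $F$ of Theorem \ref{construction of function1} plays no real role. Any outer function in $A(\DD)\cap\cD_\alpha$ with boundary zero set $E$ would do, for instance $f_{E,N}$ from Section 4. So your argument essentially reduces this theorem to Theorem \ref{first}, which already gives a function in $\cD$.

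The paper's route avoids the deep El-Fallah--Kellay--Ransford theorem. It exploits the structure of $F$ instead: the partial sums are holomorphic across $\TT$, and the tails are small in norm because $\sum_n \mathrm{cap}_{\alpha_n}(E_n)^{1/2}<\infty$. Consequently its cyclicity half needs only a closed $E$ with $\mathrm{cap}_{\alpha_n}(E)=0$ along a sequence $\alpha_n\uparrow 1$; the Cantor structure of $E$ is irrelevant there. That independence is what makes the second construction ``fundamentally different'', as the introduction advertises.
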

\begin{proof}
    By Proposition \ref{construction cantor a 1}, there exists a generalized Cantor set (and by extension a Carleson set) $E\subset\TT$ such that $\mathrm{cap}_\alpha(E)=0$, for all $\alpha\in (0,1)$, but $\mathrm{cap}_1(E)>0$. Then, Theorem \ref{construction of function1} is applicable and there exists $F\in \cD_\alpha\cap C(\overline{\DD}\setminus E)$ for all $\alpha<1$, such that $\lim_{z\rightarrow \zeta}\mathrm{Re}F(z)=+\infty$ for all $\zeta\in E$. The function $f\vcentcolon=e^{-F}$ clearly belongs to $A(\DD)$ and satisfies $\mathcal{Z}(f^*)=E$. Since $f'=-fF'$ with $F\in\cD_\alpha$, for all $\alpha<1$, while $f$ remains bounded, the integral in \eqref{eq:integral} provides at once the membership of $f$ in every Dirichlet-type space $\cD_\alpha$, $\alpha<1$. Revisiting the proof of Theorem \ref{construction of function1}, we recall that $f=\exp(-\sum_{n=0}^{+\infty}f_n)$, where the sum contains functions $f_n\in\mathcal{O}(\overline{\DD})$ with $\mathrm{Re}f_n\ge0$. In particular, $\sum_{n=0}^{+\infty}\|f_n\|_\alpha<+\infty$, for all $\alpha<1$. Set $F_N\vcentcolon=\exp(-\sum_{n=N}^{+\infty}f_n)$, $N\in\mathbb{N}$. Then $F_N\to 1$ pointwise and clearly $\sup_{N\in\mathbb{N}}\|F_N\|_\alpha<+\infty$, for all $\alpha<1$. In view of \cite[p.272]{Brown-Shields}, this implies that for any choice of $\alpha<1$,
    \begin{equation}\label{N}
    F_N\rightarrow1 \quad  \text{weakly in }\cD_\alpha.
    \end{equation}
    Furthermore, $F_N/f=\exp(\sum_{n=0}^{N-1}f_n)$ and thus $F_N/f\in\mathcal{O}(\overline{\DD})$, for any $N\in\mathbb{N}$. This signifies that each function $F_N/f$ acts as a multiplier in every space $\cD_\alpha$, $\alpha\in \RR$; see \cite[p.273]{Brown-Shields}. Therefore $F_N=(F_N/f)f\in[f]_\alpha$, which is a sequentially weakly closed subspace of $\cD_\alpha$ (since $[f]_\alpha$ is a closed convex subset). By \eqref{N} we understand that $1\in[f]_\alpha$ and hence $f$ is cyclic in $\cD_\alpha$, for all $\alpha<1$, by Remark \ref{equiv cyclic}. If $f$ does not belong in $\cD$, we are done. Finally, if $f$ actually belongs to $\cD$, the positivity of the logarithmic capacity $\mathrm{cap}_1(\mathcal{Z}(f^*))=\mathrm{cap}_1(E)$ yields that $f$ cannot be cyclic in $\cD$ in view of Theorem \ref{thm: efkr 1}.
\end{proof}

\begin{remark}\label{r}
One might wonder about the actual membership of $f$ in the Dirichlet space $\mathcal{D}$. Suppose that $f\in \mathcal{D}$. If $F=-\log f\in \mathcal{D}$, then $f$ must be cyclic in $\mathcal{D}$, see \cite[Theorem 4.4]{Extrem polyn}. As we mentioned, this cannot be true due to the positivity of the logarithmic capacity of $\mathcal{Z}(f^*)$. Thus, either $f\in \mathcal{D}$ and $F\notin\mathcal{{D}}$, or $f\notin\mathcal{D}$.
\end{remark}

Finally, we may employ the same scheme for the parameters $\alpha^*\in(0,1)$ in order to prove the following theorem whose proof we omit for the sake of avoiding repetition:

\begin{theorem}\label{thm:second construction for a}
Fix $\alpha^*\in (0,1)$. There exists a function $f\in \cD_\alpha\cap A(\DD)$, for all $\alpha<\alpha^*$, which is cyclic in $\cD_\alpha$, for all $\alpha<\alpha^*$, but cannot be cyclic in $\DDD_{\alpha^*}$.
\end{theorem}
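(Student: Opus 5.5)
We will mirror the proof of Theorem \ref{thm:second construction}, with the index $1$ replaced by $\alpha^*$ throughout. First, by Proposition \ref{construction Cantor 2} we take a generalized Cantor set $E\subset\TT$ with $\mathrm{cap}_\alpha(E)=0$ for all $\alpha\in(0,\alpha^*)$ and $\mathrm{cap}_{\alpha^*}(E)>0$. Next we need an analogue of Theorem \ref{construction of function1} in which the limit $1$ is replaced by $\alpha^*$. Concretely, we fix a strictly increasing sequence $\{\alpha_n\}\subset(0,\alpha^*)$ with $\alpha_n\to\alpha^*$, so that $\mathrm{cap}_{\alpha_n}(E)=0$ for every $n$.

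The proof of Theorem \ref{construction of function1} then goes through verbatim. By outer regularity of capacity we choose compact neighborhoods $E_n$ of $E$ in $\TT$ with $\mathrm{cap}_{\alpha_n}(E_n)\in(0,1)$ and $\sum_n \mathrm{cap}_{\alpha_n}(E_n)^{1/2}<+\infty$. We take equilibrium measures $\nu_n$ and form
\begin{equation*}
F=\sum_n \mathrm{cap}_{\alpha_n}(E_n)\, f_{\alpha_n,\nu_n}(r_n z),
\end{equation*}
with the $r_n\uparrow 1$ chosen via the harmonic measure estimate \eqref{AAA}. All constants in Lemmas \ref{Real estimation}, \ref{lem:energy} and \ref{real} depend only on $\alpha_1$, so they are uniform in $n$. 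For any $\alpha<\alpha^*$ only finitely many $\alpha_n$ lie below $\alpha$, and the same norm estimate, based on \eqref{PPP} and $\|\cdot\|_\alpha\le\|\cdot\|_{\alpha_n}$, gives $F\in\cD_\alpha$. In fact $\sum_n \mathrm{cap}_{\alpha_n}(E_n)\|f_n\|_\alpha<+\infty$ for each $\alpha<\alpha^*$. We also get $F\in C(\overline{\DD}\setminus E)$, $\mathrm{Re}F\ge 0$, and $\mathrm{Re}F(z)\to+\infty$ as $z\to\zeta\in E$.

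We then set $f=e^{-F}$. Since $|f|\le 1$ and $f\to 0$ at $E$, we have $f\in A(\DD)$ and $\mathcal{Z}(f^*)=E$. Since $f'=-fF'$ with $f$ bounded, the integral criterion \eqref{eq:integral} gives $f\in\cD_\alpha$ for every $\alpha<\alpha^*$. For cyclicity at a fixed $\alpha<\alpha^*$ we put
\begin{equation*}
F_N=\exp\Bigl(-\sum_{n\ge N}\mathrm{cap}_{\alpha_n}(E_n)f_n\Bigr).
\end{equation*}
Then $F_N\to 1$ pointwise, and $\sup_N\|F_N\|_\alpha<+\infty$. Hence $F_N\to1$ weakly in $\cD_\alpha$ by \cite[p.272]{Brown-Shields}. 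Moreover $F_N/f=\exp(\sum_{n<N}\cdots)\in\mathcal{O}(\overline{\DD})$ is a multiplier, so $F_N\in[f]_\alpha$. Since $[f]_\alpha$ is weakly closed, it follows that $1\in[f]_\alpha$, and Remark \ref{equiv cyclic} gives cyclicity. Finally, if $f\notin\cD_{\alpha^*}$ it is trivially not cyclic there. If $f\in\cD_{\alpha^*}$, then Theorem \ref{thm: efkr 1} together with $\mathrm{cap}_{\alpha^*}(E)>0$ shows that $f$ is not cyclic in $\cD_{\alpha^*}$.

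The main obstacle is the uniform bound $\sup_N\|F_N\|_\alpha<+\infty$. Boundedness of $F_N$ is immediate from $\mathrm{Re}\ge0$, so the issue is $F_N'$. We would control it through \eqref{eq:integral}: $|F_N'|\le |G_N'|$, where $G_N=\sum_{n\ge N}\mathrm{cap}_{\alpha_n}(E_n)f_n$ is the tail of $F$. Hence $\|F_N\|_\alpha\lesssim 1+\|G_N\|_\alpha$, and the tails $\|G_N\|_\alpha$ are uniformly bounded, indeed tend to zero, by the summability established above. The one point that needs care is that this summability holds for each fixed $\alpha<\alpha^*$ only after discarding finitely many terms, which is harmless.
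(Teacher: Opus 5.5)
Your proposal is correct and matches the scheme the paper intends. The paper omits this proof, saying it repeats the proof of Theorem \ref{thm:second construction} using the set from Proposition \ref{construction Cantor 2} and a version of Theorem \ref{construction of function1} with $\alpha_n\uparrow\alpha^*$, which is what you do; your justification of $\sup_N\|F_N\|_\alpha<+\infty$ via $|F_N'|\le|G_N'|$, the integral form \eqref{eq:integral}, and the tail bound from \eqref{PPP} and \eqref{series cap 1/2} soundly fills in the step the paper only calls ``clear''.
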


The function in the theorem above serves as a second critical counterexample in the main \hyperref[Problem]{Problem} for $\alpha^*\in (0,1)$.

\section{Further Questions and Remarks}
There exists a function $f\in \DDD_\alpha$, for all $\alpha<1$, but $f\notin\DDD$. Indeed, such a function is given simply by $f(z)=\sum_{k=0}^{+\infty}\frac{1}{k}z^k$. In light of the results obtained in Theorems \ref{first}, \ref{thm:second construction} and \ref{thm:second construction for a}, we may formulate another question.

\begin{question*}
Fix $\alpha^*\in (0,1]$. Does there exist a function $f\in \DDD_\alpha$, $\alpha\in (0,\alpha^*)$, which is cyclic for all $\alpha<\alpha^*$, but $f\notin\DDD_{\alpha^*}$?  
\end{question*}

On another note, over the last decade, authors have also investigated cyclicity theory in several variables. A natural question would be whether the procedure described in this work can be extended in the setting of several complex variables. 

More specifically, one might wonder about the construction of generalized Cantor sets in the unit ball or polydisc of $\CC^n$, $n\ge2$, satisfying certain Riesz $\alpha$-capacity properties for prescribed indices $\alpha$. Then, these generalized Cantor sets could be turned into boundary zero sets of a function and the relation between the cyclicity of the function and the positivity of the Riesz $\alpha$-capacity of the set could be studied; see \cite{Chalmoukis1, Chalmoukis, Vavitsas1,  Vavitsas-Zarvalis} for related results involving Riesz $\alpha$-capacity and polynomials. 

Let us note that Cantor-type sets, and especially corner-like Cantor sets in $\RR^n$, are of great importance in estimating fractional Lipschitz caloric capacities related to fractional heat equations. For instance, we refer the interested reader to \cite{Hernandez1, Hernandez2} for recent work on this topic, as well as constructions and capacity estimates of hyper Cantor sets. Consequently, these works provide further motivation for extending the present framework to the multivariable setting.

\section*{Acknowledgements}
We thank \L. Kosi\'{n}ski and T. Ransford for bringing the main \hyperref[Problem]{Problem} to our attention and for the helfpul discussion on its solution. This work was supported by National Key R\&D Program of China, No. 2024YFA1015200 and the Natural Science Foundation of Guangdong Province, No. 2025A1515011428.

\bibliographystyle{plain}

\begin{thebibliography}{99}
\bibitem{Aleman1} A. Aleman, Hilbert Spaces of Analytic Functions Between the Hardy and the Dirichlet Space, \emph{Proc. Amer. Math. Soc.}, \textbf{115}(1): 97--104, 1992


\bibitem{Extrem polyn} C. B\'{e}n\'{e}teau, A.Condori, C. Liaw, D. Seco, and A. Sola, Cyclicity in Dirichlet-type spaces and extremal polynomials, \textit{J. Anal. Math.}, \textbf{126}: 259--286, 2015

\bibitem{Kosinski-Sola1} C. B\'{e}n\'{e}teau, G. Knese, \L. Kosi\'{n}ski, C. Liaw, D. Seco and A. Sola, Cyclic polynomials in two variables, \textit{Trans. Amer. Math. Soc.}, \textbf{368}(12): 8737--8754, 2016


\bibitem{Beurling} A. Beurling, On two problems concerning linear transformations in Hilbert space, \textit{Acta Math.}, \textbf{81}: 239--255, 1948

\bibitem{Bergvist}
L. Bergqvist, A note on cyclic polynomials in polydiscs, \textit{Anal. Math. Phys.}, \textbf{8}: 197--211, 2018

\bibitem{Brown-Cohn} L. Brown and W. Cohn, Some Examples of Cyclic Vectors in the Dirichlet Space, \textit{Proc. Amer. Math. Soc.}, \textbf{95}(1): 42--46, 1985 

\bibitem{Brown-Shields} L. Brown and A. L. Shields, Cyclic vectors in the Dirichlet space, \textit{Trans. Amer. Math. Soc.}, \textbf{285}(1): 269--303, 1984

\bibitem{Carleson} L. Carleson, Selected Problems on Exceptional Sets, \emph{Van Nostrand, Princeton, N.J.}, 1967

\bibitem{Carleson2} L. Carleson, Sets of uniqueness for functions regular in the unit circle,
\emph{Acta Math.}, \textbf{87}: 325--345, 1952

\bibitem{Chalmoukis1}
N. Chalmoukis and M. Hartz, Potential theory and boundary behavior in the Drury-Arveson space, preprint, available at arXiv:2410.07773 [math.FA]

\bibitem{Chalmoukis}
N. Chalmoukis and M. Hartz, Totally null sets and capacity in Dirichlet type spaces, \emph{J. London Math. Soc.}, \textbf{106}: 2030--2049, 2022


\bibitem{Primer} O. El-Fallah, K. Kellay, J. Mashreghi and T. Ransford, A Primer on the Dirichlet Space, \emph{Cambridge Tracts in Mathematics 203, Cambridge University Press, Cambridge}, 2014

\bibitem{EFKR} O. El-Fallah, K. Kellay, and T. Ransford, Cantor sets and cyclicity in weighted Dirichlet spaces, \emph{J. Math. Anal. Appl.}, \textbf{372}(2): 565--573, 2010

\bibitem{EFKR2} O. El-Fallah, K. Kellay, and T. Ransford, Cyclicity in the Dirichlet space, \textit{Ark. Mat.}, \textbf{44}: 61--86, 2006

\bibitem{EFKR*} O. El-Fallah, K. Kellay, and T. Ransford, On the Brown-Shields conjecture
for cyclicity in the Dirichlet space, \emph{Adv. Math.}, \textbf{222}(6): 2196--2214, 2009

\bibitem{Frostman} O. Frostman, Potentiel d’        \'{e}quilibre et capacit\'{e} des ensembles avec
quelques applications \`{a} la th\'{e}orie des fonctions, \emph{Thesis Meddel. Lunds Univ.
Mat. Sem.}, \textbf{3}: 1--118, 1935

\bibitem{Gautschi} W. Gautschi, Some Elementary Inequalities Relating to the Gamma and Incomplete Gamma Function, \textit{Journal of Mathematics and Physics}, \textbf{38}(1-4):  77--81, 1959

\bibitem{Hernandez1}
J. Hernández, The fractional Lipschitz caloric capacity of Cantor sets, \emph{J. London Math. Soc.}, \textbf{113}:  e70493, 2026

\bibitem{Hernandez2}
J. Hernández, Removable singularities for Lipschitz fractional caloric functions in time varying domains, preprint, available at arXiv:2412.18402 [math.AP]


\bibitem{Kosinski-Sola2} 
G. Knese, \L. Kosi\'{n}ski,  T. Ransford and A. Sola, Cyclic polynomials in anisotropic Dirichlet spaces, \emph{J. Anal. Math.}, \textbf{138}(1): 23--47, 2019

\bibitem{Vavitsas1} 
\L. Kosi\'{n}ski and D. Vavitsas, Cyclic polynomials in Dirichlet-type spaces in the unit ball of $\CC^2$, \textit{Constr. Approx.}, \textbf{58}(2): 343--361, 2023

\bibitem{Mironov}
M. Mironov and J. Sampat, Jointly cyclic polynomials and maximal domains, \emph{Proc. Amer. Math. Soc.}, \textbf{153}(11):4781--4795, 2025

\bibitem{Ransford}  
T. Ransford, Potential Theory in the Complex Plane, \textit{London Mathematical Society Student Texts, Cambridge University Press, Cambridge}, \textbf{28}, 1995

\bibitem{Richter1} 
S. Richter, Invariant subspaces in Banach spaces of analytic functions, \emph{Trans. Amer. Math. Soc.}, \textbf{304}: 585--616, 1987

\bibitem{Richter-Shields} 
S. Richter and A. Shields, Bounded analytic functions in the Dirichlet space, \emph{Math. Z.} \textbf{198}: 151--159, 1988

\bibitem{Richter} 
S. Richter and J. Sunkes, Hankel operators, invariant subspaces, and cyclic vectors in the Drury-Arveson space, \textit{Proc. Amer. Math. Soc.}, \textbf{144}(6): 2575--2586, 2016

\bibitem{Ross} 
W. T. Ross, The classical Dirichlet space, in Recent advances in operator-related function theory, \emph{Contemp. Math.}  \textbf{393}: 171--197, 2016

\bibitem{Rudin} 
W. Rudin, Function Theory in the Unit Ball of $\CC^n$, \emph{Grundlehren  der Mathematischen Wissenschaften 241, Springer-Verlag, New York-Berlin}, 1980

\bibitem{Sampat}
J. Sampat, Cyclicity preserving operators on spaces of analytic functions in $\CC^n$, \emph{Integral Equations Operator Theory} \textbf{93}(2), Paper No. 14, 20 pp., 2021

\bibitem{Taylor} 
G. D. Taylor, Multipliers on $D_\alpha,$ \emph{Trans. Amer. Math. Soc.}, \textbf{123}: 229--240, 1966

\bibitem{Pouriya} 
P. T. Ziarati, Examples of critically cyclic functions in the Dirichlet spaces of the ball, preprint, available at arXiv: 2601.08651 [math.CV]

\bibitem{Vavitsas2}
D. Vavitsas, A note on cyclic vectors in Dirichlet-type spaces in the unit ball of ${\mathbb C}^n$, \textit{Canad. Math. Bull.},  \textbf{66}(3): 886--902, 2023 

\bibitem{Vavitsas-Zarvalis} 
D. Vavitsas and K. Zarvalis, Non-cyclicity and polynomials in Dirichlet-type spaces of the unit ball, \emph{Bull. London Math. Soc.}, \textbf{56}(120): 3905--3919, 2024

\end{thebibliography}

\end{document}